\newcommand{\m}{\mathcal}
\newcommand{\dd}{{\mathrm{d}}}
\newcommand{\sP}{{\mathscr P}}
\DeclareMathOperator{\tr}{tr}
\DeclareMathOperator{\Real}{Re}
\numberwithin{equation}{section}
\begin{document}
% --------------------------------------------------------------
%                         Start here
% --------------------------------------------------------------

\title[Hidden monotonicity and canonical transformations for MFG]{Hidden monotonicity and canonical transformations for mean field games and master equations}

\author[M. Bansil]{Mohit Bansil}
\address{Department of Mathematics, University of California, Los Angeles, CA 90095-1555, USA}
\email{mbansil@math.ucla.edu} 

\author[A.R. M\'esz\'aros]{Alp\'ar R. M\'esz\'aros}  
\date{\today}
\address{Department of Mathematical Sciences, University of Durham, Durham DH1 3LE, England}
\email{alpar.r.meszaros@durham.ac.uk} 

\maketitle

\begin{abstract}
In this paper we unveil novel monotonicity conditions applicable for Mean Field Games through the exploration of finite dimensional {\it canonical transformations}. Our findings contribute to establishing new global well-posedness results for the associated master equations, also in the case of potentially degenerate idiosyncratic noise. Additionally, we show that recent advancements in global well-posedness results, specifically those related to displacement semi-monotone and anti-monotone data, can be easily obtained as a consequence of our main results.
\end{abstract}

\section{Introduction}

Mean field games (MFGs for short) have been introduced in the pioneering works of Lasry--Lions and Huang--Malham\'e--Caines (see \cite{LasLio:07, HuaMalCai}). The main motivation of both groups was to model strategic decision making in systems involving a large number of rational agents, arising from (stochastic) differential games. Ever since, this theory witnessed a great success, both from the theoretical viewpoint and the point of view of applications. We refer to \cite{CD1, Carmona2018, CarPor} for a thorough, relatively up to date description of the evolution of this field, from the probabilistic and analytic aspects.

\medskip

Already early on, Lions in his lecture series at Coll\`ege de France (\cite{lions07}) has introduced the so-called {\it master equation}, associated to MFGs. This is a nonlocal and nonlinear PDE of hyperbolic type set on $\R^d\times\sP_2(\R^d)$, where $\R^d$ models the state space of a typical agent, while $\sP_2(\R^d)$ (the set of Borel probability measures with finite second moment, supported on $\R^d$) encodes the distribution of the agents. One of the main motivations for the solvability of the master equation is that it provides a deep link between games with finite, but large number of agents and the corresponding MFG: classical solutions to the master equation serve as great tools to obtain quantitative rates of convergence of closed loop Nash equilibria of games with finite number of agents, when the number of agents tends to infinity.

\medskip

The master equation that we consider in this paper writes as follows. As data, we are given a Hamiltonian $H:\R^d\times\sP_2(\R^d)\times\R^d\to\R$  and a final cost $G:\R^d\times\sP_2(\R^d)\to\R$. We emphasize that throughout the text we assume that $H$ and $G$ are smooth enough (we detail the specific assumptions later), and in particular they are defined and finite at any probability measure with finite second moment. Therefore, they will be assumed to be non-local and regularizing in the measure variable. Furthermore, we are given a time horizon $T>0$ and the intensities of the {Brownian} idiosyncratic and common noises $\beta,\beta_0\in\R$, respectively. Then, the master equation, written for the unknown function $V:(0,T)\times\R^d\times\sP_2(\R^d)\to\R$ reads as 

\small
\begin{equation}\label{eq:master}
\left\{
\begin{array}{rl}
	-\partial_t V(t,x,\mu) + H(x,\mu,\pa_x V) + \m NV(t,x,\mu)\\
	 -\frac{\beta^2}{2} \Delta_{{\rm ind}}V - \frac{\beta_0^2}{2} \Delta_{{\rm com}} V(t,x,\mu) &= 0,\\[3pt]
	&  {\rm{in}}\ (0,T)\times\R^d\times\sP_2(\R^d),\\[3pt] 
	V(T, x, \mu) &= G(x, \mu),\\[3pt]
	& {\rm{in}}\ \R^d\times\sP_2(\R^d),
\end{array}
\right.
\end{equation}
\normalsize
where
\[
\m NV(t,x,\mu) &=  \int_{\R^d} \pa_\mu V(t,x,\mu,\ti x) \cdot \pa_pH(\ti x, \mu, \pa_x V(t, \ti x, \mu)) \dd\mu(\ti x)\\
\Delta_{{\rm ind}}V &= \tr(\pa_{x x} V(t,x,\mu)) + \int_{\R^d} \tr(\pa_{\ti x \mu} V(t,x,\mu,\ti x)) \dd\mu(\ti x)
\]
and 
\[
\Delta_{{\rm com}}V &= \tr(\pa_{x x} V(t,x,\mu)) + \int_{\R^d} \tr(\pa_{\ti x \mu} V(t,x,\mu,\ti x)) \dd\mu(\ti x)\\
& + 2 \int_{\R^d} \tr(\pa_{x \mu} V(t,x,\mu,\ti x)) \dd\mu(\ti x) \\
&+ \int_{\R^d\times\R^d} \tr(\pa_{\mu \mu} V(t,x,\mu,\ti x, \bar x)) \dd\mu(\ti x) \dd\mu(\bar x).
\]
Here $\pa_\mu V$ stands for the so-called {\it Wasserstein gradient} whose definition is given later in the text.

\medskip

The search for well-posedness theories for \eqref{eq:master} has initiated a great program in the theory. In general, this poses great challenges because of the non-local and infinite dimensional character of the PDE. In particular, this PDE does not possess a comparison principle which means that the consideration of viscosity solutions, for instance, would not be feasible in this setting. Therefore, notions of suitable weak solutions could lead to debates, especially if these lack uniqueness principles. However, there is no ambiguity regarding classical solutions. Our focus in this paper will also be on classical solutions, and so, unless otherwise specified, the term {\it well-posedness} should be understood in the sense of classical solutions. Similarly to the theory of finite dimensional conservations laws, when aiming for global classical solutions, it is quite clear that these should be expected only under suitable {\it monotonicity conditions} on the data $H$ and $G$. Such monotonicity conditions are also strongly related to the uniqueness of MFG Nash equilibria. 

\medskip

{\bf Literature review {on the well-posedness of master equations}.} 
To date, there have been different notions of monotonicity conditions proposed on the data $H$ and $G$, which could serve as sufficient conditions for the global well-posedness theory of \eqref{eq:master}. The diversity and richness of these conditions are deeply related to the geometry under the lens of which we look at $\sP_2(\R^d)$. For instance, $\sP_2(\R^d)$ can be seen as a flat convex space, but it is natural to look at it also as a non-negatively curved infinite dimensional manifold, when equipped with suitable metrics. Historically, the so-called {\it Lasry--Lions (LL) monotonicity condition} was the first one, introduced already in the seminal work \cite{LasLio:07}. Geometrically, this is linked to the flat geometry, imposed on $\sP_2(\R^d).$ When it comes to nonlocal Hamiltonians, this notion has been defined and exploited so far only for so-called {\it separable} Hamiltonians, i.e. the ones which have the structure
\begin{align}\label{cond:sep}
H(x,\mu,p):= H_0(x,p) - F(x,\mu),\ \ \forall (x,\mu,p)\in \R^d\times\sP_2(\R^d)\times\R^d, 
\end{align}
for some $H_0$ and $F$. An alternative monotonicity condition is the so-called {\it displacement monotonicity} condition, which does not require the {separable} structural assumption on $H$. This stems from the notion of {\it displacement convexity}, used widely in the context of optimal transport theory. Thus, this is linked to the curved geometry on $\sP_2(\R^d)$. We now give a brief overview of the well-posedness theories for \eqref{eq:master} in these settings and we also mention some alternative, more recently proposed notions of monotonicity conditions.

\medskip

In \cite[Theorem 5.46]{Carmona2018} the authors have shown that the master equation \eqref{eq:master} is globally well-posed if the data are LL monotone and possess additional regularity assumptions. Several other works provide similar conclusions. We refer to \cite[Theorem 2.4.5]{CarDelLasLio} for the case when the physical space is the flat torus instead of $\R^d$ and to \cite[Theorems 56 and 58]{ChaCriDel22} to the case without common noise (i.e. $\beta_0=0$). We refer also to \cite{JakRut} for new results and clarifications regarding the results from \cite{CarDelLasLio}. However, \cite[Theorem 5.46]{Carmona2018} is the closest result for our purposes.

It is also important to mention that all these global well-posedness results in the context of Lasry--Lions monotonicity impose both the separable structure on the Hamiltonian and the presence of a non-degenerate idiosyncratic noise.

In the context of displacement monotonicity global in time well-posedness have been obtained chronologically as follows. \cite{TwoAuthor2022} provided this in the context of deterministic and potential (in particular $\beta=\beta_0=0$ and $H$ separable) games (for similar results, see also \cite{BenGraYam}). \cite{FourAuthor} provided the first global in time well-posedness result in the case of non-separable displacement monotone Hamiltonians and non-degenerate idiosyncratic noise (i.e. $\beta\neq 0$). Finally, \cite{BanMesMou} provided the result in the case of degenerate idiosyncratic noise (i.e. $\beta=0$) and compared to \cite{FourAuthor}, under lower level regularity assumptions on the data, and the weaker version of the displacement monotonicity condition on $H$. %(i.e. \eqref{def:disp_H-1st} instead of \eqref{def:disp_H-2nd}).

\medskip

Recently, in \cite{MouZha:2022} and \cite{MouZha:2024} the authors have proposed a {notion of} {\it anti-monotonicity} condition on final data of master equations, which together with other sufficient structural conditions on the Hamiltonian resulted in the the global in time well-posedness of the master equation. We would like to emphasize that for this to hold, the anti-monotonicity condition on the final data has to be carefully chosen in line with the structural conditions on the Hamiltonian. As we show below, this framework can entirely be embedded into our main results under the umbrella of {our newly proposed canonical transformation.} % of type \eqref{def:H_alpha}. 
% Before this, let us recall the corresponding definitions and results from these references.

Several other recent developments have seen the light in the context of the well-posedness of MFG master equations. For a non-exhaustive list we refer to \cite{AmbMes,Ber,CarCirPor20,CecDel, GraMes:2022, GraMes22b}.

\medskip

{\bf Our contributions.} In this paper our main objective is to explore some geometric features of Hamiltonian systems which could lead to the global well-posedness of the master equation \eqref{eq:master}. The heart of our analysis consist of so-called {\it canonical transformations} which in particular reveal new perspectives on existing and new monotonicity conditions on the Hamiltonians and final data associated to \eqref{eq:master}, and in turn lead to new well-posedness theories. The values of the noise intensities, $\beta, \beta_0$ will not not be significant in our consideration, and our main results hold true also for degenerate problems, i.e. when $\beta=0$ or $\beta_0 = 0$.

\medskip

In classical Hamiltonian mechanics, canonical transformations are coordinate transformations on the phase space, which preserve the structure of Hamilton's equations. 
In symplectic geometry, canonical transforms are known as symplectomorphisms (where the phase space is a cotangent bundle and the symplectic form is the canonical 2-form). Since in our setting we are only concerned with Euclidean space we do not use the symplectic terminology. However, it would be interesting to study how symplectomorphisms could potentially generate new well-posedness theories for Hamilton--Jacobi equations and the master equation in more general settings (i.e. when the underlying space is not Euclidean). We refer the reader to \cite{Arnold1989} for a introduction to applications of symplectic geometry in classical mechanics. We refer also to our companion short note \cite{BanMes:short}, where we explain the regularization effect of such transformations in the case of deterministic finite dimensional HJB equations.

\smallskip

As the master equation has in particular a natural character arising from infinite dimensional Hamiltonian dynamics, we will show below, that such transformations play a deep role in revealing hidden features of it. 

\medskip

Let us describe the driving idea behind our results. For Hamiltonians $H:\R^d\times\sP_2(\R^d)\times\R^d\to\R$ and final data $G:\R^d\times\sP_2(\R^d)\to\R$ we consider a family of prototypical {\it linear} canonical transformations as follows. Let $\alpha\in\R$ and define $H_\alpha: \R^d\times\sP_2(\R^d)\times\R^d\to\R$ and $G_\alpha: \R^d\times\sP_2(\R^d)\to\R$ as
\begin{align}\label{def:H_alpha}
H_\alpha(x,\mu,p) := H(x,\mu,p-\alpha x) \ \ {\rm{and}}\ \ G_\alpha(x,\mu):= G(x,\mu) + \frac{\alpha}{2}|x|^2. 
\end{align}
In particular, this means that the corresponding canonical transformation has the form of 
$$
\R^d\times\sP_2(\R^d)\times\R^d\ni (x,\mu,p)\mapsto (x,\mu, x-\alpha p).
$$
This is a `finite dimensional' transformation, as there is no change in the measure variable $\mu$.
Having defined these transformations, the heart of our analysis is based on the following observation: {\it fix any $\alpha \in \R$, then the master equation with data $(H,G)$ is well-posed if and only if it is well-posed with data $(H_\alpha, G_\alpha)$} (see Theorem \ref{thm:intro1}; in particular the solutions to the corresponding master equations differ only by an explicit function of $(t,x)$, parametrized by $\alpha$).

The message of this {result} is that if one produces a well-posedness theory for the master equation, this will lead to a whole {\it one parameter family} of well-posedness theories, with the transformed data. A deeper consequence of this theorem is the opposite implication. Suppose that one is given the data $(H,G)$. If one is able to find a suitable range of the parameter $\alpha$ such that $(H_\alpha, G_\alpha)$ satisfies some well-known monotonicity conditions, then the problem with the original data must be well-posed. This second one will be the direction that we investigate in this paper.

\smallskip

Fix $\alpha \in \R$. It is easy to see that $G$ is LL monotone, if and only if $G_\alpha$ is LL monotone and the situation is the same for separable $H$. However, as we will show below, this phenomenon is much different in the displacement monotone regime. Therefore the previously described result has powerful applications in the context of displacement monotonicity but not for LL monotonicity.

\medskip

In the main theorem of this paper, Theorem \ref{thm:main2}, we propose easily verifiable sufficient conditions on $H$ to ensure that $H_\alpha$ is displacement monotone. As a consequence, we discover new regimes of global well-posedness of the master equation. In an informal way, this result can be summarized as follows (we refer to Theorem \ref{thm:main2} for the precise statement).
%\begin{thm}\label{thm:main}
%Suppose that $H:\R^d\times\sP_2(\R^d)\times\R^d\to\R$ are {\color{red} is} twice continuously differentiable with uniformly bounded second order derivatives. Suppose moreover that $H$ is strongly convex in the $p$-variable. Suppose that the smallest eigenvalue of the symmetrization of $\partial_{xp}H$ is sufficiently large compared to a specific quantity depending on the other second derivatives of $H$. Then, $H_{\alpha}$ is displacement monotone for a suitable range of $\alpha\in\R$, depending on the size of the second derivatives of $H$ in a precise way.
%
%Furthermore, if $G:\R^d\times\sP_2(\R^d)\to\R$ is twice continuously differentiable and displacement $\alpha$-monotone for such specific $\alpha$, then the master equation is globally well-posed. 
%\end{thm}

\begin{thm}\label{thm:main}
Suppose that $H:\R^d\times\sP_2(\R^d)\times\R^d\to\R$ is twice continuously differentiable with uniformly bounded second order derivatives. Suppose moreover that $H$ is strongly convex in the $p$-variable. 

Suppose that the symmetric part of $\partial_{xp}H$ is bounded below by an explicit quantity depending on the other second derivatives of $H$. Then, $H_{\alpha}$ is displacement monotone for a suitable range of $\alpha\in\R$, depending on the size of the second derivatives of $H$ in a precise way.

Furthermore, if $G:\R^d\times\sP_2(\R^d)\to\R$ is twice continuously differentiable and displacement $\alpha$-monotone for such specific $\alpha$, then the master equation is globally well-posed. 
\end{thm}

\medskip

This theorem has an immediate {implication, coming from a sort of {\it `regularization phenomenon'} of $\partial_{xp}H$. This} can informally be formulated as follows. 

\begin{cor} \label{thm:intro3}
Suppose that $G:\R^d\times\sP_2(\R^d)\to\R$ and $H:\R^d\times\sP_2(\R^d)\times\R^d\to\R$ are twice continuously differentiable with uniformly bounded second order derivatives. Suppose moreover that $H$ is strongly convex in the $p$-variable. 

We have that there exists $C>0$ depending on {second derivatives of $H$ and $G$ (but independent of $T$)} so that if $\alpha \geq C$ then {the master equation is globally well-posed with data} $(\tilde H, G)$, %are displacement monotone, 
where $\tilde H:\R^d\times\sP_2(\R^d)\times\R^d\to\R$ is given by 
$$
\tilde H(x,\mu,p):= H(x,\mu,p)+ \alpha p\cdot x.
$$
\end{cor}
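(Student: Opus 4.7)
The plan is to derive the corollary as a direct consequence of Theorem~\ref{thm:intro1} and Theorem~\ref{thm:main2}, together with the existing global in time well-posedness theory for master equations with displacement monotone data (see e.g.\ \cite{FourAuthor, BanMesMou}). First I would note that $\tilde H = H + \alpha p \cdot x$ inherits from $H$ all the structural assumptions required by Theorem~\ref{thm:main2}: it is $C^2$, the added drift is linear in $p$ (so $\partial_{pp}\tilde H = \partial_{pp} H$ preserves the strong $p$-convexity and its modulus), and the bounds on the second derivatives in $x$ and in the $\mu$-variable coincide with those of $H$. Applying Theorem~\ref{thm:main2} to $\tilde H$ yields a threshold $C_H>0$, depending only on the second derivatives of $H$, such that the canonically transformed Hamiltonian $\tilde H_\alpha$ from~\eqref{def:H_alpha} is displacement monotone for $\alpha \geq C_H$.

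In parallel I would check the easy half: that the transformed final cost $G_\alpha(x,\mu) = G(x,\mu) + \tfrac{\alpha}{2}|x|^2$ is displacement monotone for $\alpha \geq C_G$, where $C_G$ depends only on the second derivatives of $G$. This is a short direct computation, since adding $\tfrac{\alpha}{2}|x|^2$ contributes $\alpha |x-y|^2$ to the displacement monotonicity integral, which dominates the bounded pieces coming from $G$ once $\alpha$ exceeds a quantity controlled by $\|\partial_{xx} G\|_\infty$, $\|\partial_{x\mu} G\|_\infty$, and $\|\partial_{\mu\mu} G\|_\infty$.

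With both $\tilde H_\alpha$ and $G_\alpha$ displacement monotone for $\alpha \geq C := \max(C_H, C_G)$, the existing well-posedness theory delivers the classical global-in-time solvability of the master equation \eqref{eq:master} with data $(\tilde H_\alpha, G_\alpha)$ for arbitrary $T>0$. Theorem~\ref{thm:intro1}, applied to $(\tilde H, G)$ with parameter $\alpha$, then transports this well-posedness back to the original data $(\tilde H, G)$, yielding the desired conclusion. Note that $T$ does not enter any of the thresholds, which is why the constant $C$ is $T$-independent as claimed.

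The main obstacle I anticipate is the first step — guaranteeing that the threshold $C_H$ coming from Theorem~\ref{thm:main2} depends only on $H$'s second derivatives and is independent of $\alpha$. The delicate point is that the cross derivative $\partial_{xp}\tilde H = \partial_{xp} H + \alpha I$ grows linearly in $\alpha$, so a criterion that naively saw this quantity would force $C_H$ to grow with $\alpha$, producing a circular condition. The philosophy of the canonical transformation viewpoint is that this $\alpha$-growth is precisely what the shift $p \mapsto p - \alpha x$ is designed to absorb in the expression
\[
\tilde H_\alpha(x,\mu,p) = H(x,\mu, p - \alpha x) + \alpha (p - \alpha x)\cdot x,
\]
so that the effective Hessian structure of $\tilde H_\alpha$ that enters Theorem~\ref{thm:main2}'s displacement monotonicity estimate should only feel the original bounds of $H$, not their $\tilde H$-inflated counterparts. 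Making this cancellation quantitatively precise and aligning it with the exact hypotheses of Theorem~\ref{thm:main2} is the substantial work of the proof.
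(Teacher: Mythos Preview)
Your overall strategy is exactly the paper's: apply Theorem~\ref{thm:main2} to $\tilde H$ (noting that only $\partial_{xp}\tilde H=\partial_{xp}H+\alpha I$ changes), use the resulting displacement monotonicity together with the semi-monotonicity of $G$, and transport back via Theorem~\ref{thm:intro1}. So the architecture is correct.

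The gap is precisely the one you flag at the end, but your proposed resolution is wrong. The hoped-for ``cancellation'' in the Hessian of $(\tilde H)_\alpha$ does not occur: a direct computation gives
\[
\partial_{xx}(\tilde H)_\alpha(x,\mu,p)=\partial_{xx}H-2\alpha\,\Real\partial_{xp}H+\alpha^2\partial_{pp}H-2\alpha^2 I
\]
(evaluated at $(x,\mu,p-\alpha x)$), so the quadratic-in-$\alpha$ contribution is $\alpha^2(\partial_{pp}H-2I)$, which need not be $\le 0$ since no bound $|\partial_{pp}H|<2$ is assumed. Equivalently, plugging $\beta=\alpha$ into the quadratic from the proof of Theorem~\ref{thm:main2} applied to $\tilde H$ yields leading coefficient $|\partial_{pp}H|-2$, which can be positive. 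Thus insisting that the canonical-transformation parameter equal the $\alpha$ defining $\tilde H$ genuinely fails in general.

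The fix---and what the paper's (terse) argument actually uses---is to \emph{decouple} the two parameters. Theorem~\ref{thm:main2} applied to $\tilde H$ furnishes an interval $[\alpha^{\tilde H}_-,\alpha^{\tilde H}_+]$ of parameters $\beta$ for which $(\tilde H)_\beta$ is displacement monotone; since $\underline\kappa(\partial_{xp}\tilde H)=\underline\kappa(\partial_{xp}H)+\alpha$ while all other quantities in $\alpha^{\tilde H}_\pm$ depend only on $H$, one has $\alpha^{\tilde H}_+\to\infty$ (and $\alpha^{\tilde H}_-\to 0^+$) as $\alpha\to\infty$. Hence for $\alpha$ large---with threshold depending only on second derivatives of $H$ and $G$---the interval contains some $\beta\ge C_G$, so both $(\tilde H)_\beta$ and $G_\beta$ are displacement monotone. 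Invoking Theorem~\ref{thm:intro1} with this $\beta$ (not with $\alpha$) gives the well-posedness of the master equation with data $(\tilde H,G)$. No Hessian cancellation is needed; the ``substantial work'' you anticipate dissolves once the two parameters are allowed to differ.
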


Hence even if we did not know that the original master equation was solvable, the modified master equation is solvable for $\alpha$ large enough. One can compare the Hamiltonian $\ti H$ with the one in \cite[Example 7.2]{MouZha:2024}.

\begin{rmk}
Corollary \ref{thm:intro3} has a deep message: if the Hamiltonian is such that $\partial_{xp}H$ is sufficiently large compared to other second order derivatives of $H$ and the second order derivatives of $G$, then we have a global well-posedness theory for the master equation. Therefore $\partial_{xp}H$, and in particular adding suitable multiples of the function $(x,p,\mu)\mapsto p\cdot x$ to $H$ {can produce a {`regularization effect'} for the master equation, independently of $T>0$}. By carefully examining Lemma \ref{prop:semi-mon}, we see that what is going on is that the $p\cdot x$ term is transformed into a multiple of $\frac{\abs{x}^2}2$, which provides displacement monotonicity for the problem and hence regularizes the master equation. It is easy to see that adding a suitable multiple of the term $\frac{\abs{x}^2}2$ to $H$ produces displacement monotonicity. Clearly, these regularization effects are independent of the noise intensities.
\end{rmk}

\begin{rmk}
We emphasize that the regularization provided by the function $(x,p,\mu)\mapsto\alpha p\cdot x$ in the statement of Corollary \ref{thm:intro3} produces indeed a genuinely new class of data, not covered in the literature before, for which the master equation is globally well-posed. In particular, if we take an arbitrary pair of data $(H,G)$, not satisfying any monotonicity condition (either displacement or LL, if $H$ is separable), it is immediate to check that $\tilde H$ will satisfy neither displacement monotonicity nor LL monotonicity. Therefore, the monotonicity of the pair $(\tilde H, G)$ is indeed hidden. 
\end{rmk}

\medskip

{\bf Further implications of our main results.}  Having our main results in hand, we have revisited some previous well-posedness results from the literature.

\medskip

When $G$ is displacement semi-monotone, then the well-posedness of \eqref{eq:master} can be guaranteed if $H_\alpha$ is displacement monotone for sufficiently large $\alpha$. It turns out that our characterization for this given in Proposition \ref{prop:semi-mon} coincides with the respective assumptions on $H$ discovered recently in \cite{MouZha:2022}. 

\medskip

In the recent paper \cite{MouZha:2024}, the authors proposed a notion of {\it anti-mono\-to\-ni\-ci\-ty} for final data $G$. They have described some sufficient conditions on $H$ and $G$ which result in a global well-posedness theory of \eqref{eq:master}, if $\beta\neq 0$, and  $G$ is suitably anti-monotone. There was an emphasis on the fact that $G$ needed to be {\it `sufficiently'} anti-monotone.  

\medskip

It turns out that these well-posedness results from \cite{MouZha:2024}, under the additional assumptions that $H$ is strictly convex in the $p$-variable fall directly {into the framework} of the canonical transformations and they are an easy consequence of our main results, {in particular Corollary \ref{thm:intro3}}. More precisely, first in Proposition \ref{cor:antiImpliesSemi} we show that if $G$ is $\lambda$-anti-monotone, this implies that it is displacement semi-monotone with a constant which depends {\it only} on $\lambda$ (in particular, the displacement semi-monotonicity constant is independent of the second derivative bounds of $G$). Having strong convexity of $H$ in the $p$-variable, which has also bounded second derivatives allows us to use our Corollary \ref{thm:intro3}. The Hamiltonian considered in \cite{MouZha:2024} has the form of 
$$
H(x,\mu,p):= H_0(x,\mu,p)+ \langle A_0p, x\rangle,
$$
for some constant matrix $A_0\in\R^{d\times d}$. This is slightly different than $\tilde H$ from our Corollary \ref{thm:intro3}, but the term $\langle A_0p, x\rangle$ has exactly the same role as $\alpha p\cdot x$ in our consideration. Therefore, for completeness, as our last contributions, in Proposition \ref{prop:last} and Remark \ref{rmk:last} we show that the assumptions from the main theorem in \cite{MouZha:2024} essentially imply our assumptions. Furthermore, in the case of Hamiltonians which are strongly convex in the $p$-variable, our results need less and weaker assumption, and they hold true without the presence of a non-degenerate idiosyncratic noise. In particular, we demonstrate that the emphasis on the sufficient anti-monotonicity of $G$ in \cite{MouZha:2024} is misleading, and this is not needed. Specifically, in \cite{MouZha:2024} it is remarked: ``\dots we will need to require our data to be sufficiently anti-monotone in appropriate sense''. However we will see that anti-monotonicty is not needed (as anti-monotonicity implies semi-monotonicity) and that \cite{MouZha:2024} has other, more essential assumptions {on $H$} which are what really give the well-posedness result. 

We would like to emphasize that in this paper we provide a general mechanism leading to a global well-posedness theory of master equations, beyond \cite{MouZha:2024}, and the main results from this reference are a consequence of this general theory.

\medskip

{\bf Some concluding remarks.} 

\begin{itemize}

\item For simplicity and transparency of our main ideas, in this manuscript we have decided to focus only on linear canonical transformations of the form $\R^d\times\sP_2(\R^d)\times\R^d\ni (x,\mu,p)\mapsto (x,\mu, x-\alpha p).$ Without much philosophical effort but with significant technical effort, one could consider canonical transformations of the form
$$
\R^d\times\sP_2(\R^d)\times\R^d\ni (x,\mu,p)\mapsto (x,\mu, x-\nabla \varphi(x)),
$$
where $\varphi:\R^d\to\R$ is any given smooth potential function, with bounded second derivatives. In the case of noise, this transformation would lead to the modified Hamiltonians and final data as 
$$ 
H_\varphi(x,\mu,p): = H(x,\mu, x-\nabla \varphi(x)) + \frac{\beta^2+\beta_0^2}{2}\Delta\varphi(x)$$ 
and
$$
G_\varphi(x,\mu):= G(x,\mu) + \varphi(x). 
$$
It is easy to see that Theorem \ref{thm:intro1} holds true if in its  statement $(H_\alpha,G_\alpha)$ is replaced with $(H_\varphi,G_\varphi).$ However, in order to obtain new global well-posedness theory (in the case of potentially degenerate noise), we would need to have a `convexifying regularization' on $G_\varphi$, which means that $\varphi$ would need to be taken to be convex with sufficiently large Hessian eigenvalues. From this point of view, $\varphi(x)=\frac{\alpha}{2}|x|^2$ would be a natural choice, and this is why we have decided to reduce our study to this particular family of potentials.

\smallskip

\noindent We remark that in general Hamiltonians are only defined up to an additive constant. In {classical} mechanics, this is saying that we may pick any value to correspond to the `zero energy'. In the presence of noise the attentive reader will notice that our $H_\alpha$ is not the same as the $H_\varphi$ defined above, {when $\varphi(x)$ is taken to be $\frac{\alpha}{2}|x|^2$} . However, this is not an issue as the difference between the two is a constant. In particular, the two Hamiltonians are equivalent. Thus, we could have defined our $H_\alpha$ as $H_\alpha(x,\mu,p):= H(x,\mu,p-\alpha x) + {\frac{(\beta^2+\beta_0^2)d}{2}}\alpha$ which would then be the exact same as $H_\varphi$ defined above, however this would introduce unnecessary notational clutter.
\item In this paper we have considered only `finite dimensional' canonical transformations (where the measure component stayed fixed). These have proved to have a deep effect on new global well-posedness theories for the master equation. It is a very interesting, but seemingly challenging task to analyze truly infinite dimensional canonical transformations in the context of MFG master equations. In particular it seems that the infinite dimensional canonical transformations do not preserve the structure of MFG, they only preserve the structure of optimal control problems. In this we see a significant difference between games and variational problems.
\end{itemize}

\begin{rmk}
If the Hamiltonian $H$ has an associated Lagrangian with bounded second derivatives we must have that $H$ is strongly convex in $p$. Similarly, the master equation only corresponds to a game, when $H$ is convex in $p$. To the best of the authors knowledge there is no motivation for the master equation outside of this case. 

\smallskip

We remark that if one is interested in the case of non-convex $H$ in $p$ then one can adapt our results by using the Hamiltonian system directly. We refer to the Lagrangian purely for pedagogical reasons and it is not needed for any technical reason. In particular our {canonical} transformation and main theorem, Theorem \ref{thm:intro1}, holds regardless of the convexity of $H$ in $p$.
\end{rmk}

\medskip \medskip

\section{Preliminaries}\label{sec:3}	
%\section{{Preliminaries and new well-posedness theories for master equations}}\label{sec:3}

%\subsection{Some notations}\label{subsec:MFG}
In order to keep this discussion self-contained, let us recall some definitions and notations. 

Let $p\ge1$. Based on \cite{AGS}, we recall that the $p$-Wasserstein between $\mu,\nu\in\sP_p(\R^d)$ (probability measures with finite $p$-order moment supported on $\R^d$) is defined as 
$$
W_p^p(\mu,\nu):=\inf\left\{\int_{\R^d\times\R^d}|x-y|^p\dd\gamma(x,y):\ \gamma\in\Pi(\mu,\nu)\right\},
$$  
where $\Pi(\mu,\nu):=\left\{\gamma\in\sP_p(\R^d\times\R^d):\ (p^x)_\sharp\gamma = \mu,\ (p^y)_\sharp\gamma = \nu\right\}$ stands for the set of admissible transport plans in the transportation of $\mu$ onto $\nu$, and $p^x,p^y:\R^d\times\R^d\to\R^d$ denote the canonical projection operators, i.e. $p^x(a,b) = a$ and $p^y(a,b) = b$. We refer to the metric space $(\sP_p(\R^d),W_p)$ as the Wasserstein space.

\medskip

We refer to \cite{AGS,GanTud19} and to \cite[Chapter 5]{CD1} for the notion of Wasserstein differentiability and \emph{fully} $C^k$ functions defined on the Wasserstein space, respectively. Based on \cite{Ahu,CD1,FourAuthor,MesMou} we recall the notion of displacement monotonicity. 

\begin{defin}\label{def:disp_G}
Let $G:\R^d\times \sP_2(\R^d)\to\R$ be a fully $C^1$ function. 
\begin{enumerate}
\item We say that $G$ is \emph{displacement monotone} if 
$$
\int_{\R^d\times\R^d} [\pa_xG(x,\mu)-\pa_x G(y,\nu)]\cdot (x-y)\dd \gamma(x,y)\ge 0,
$$
for any $\gamma\in\Pi(\mu,\nu)$ and for any $\mu,\nu\in\sP_2(\R^d)$. If $G$ is more regular, say fully $C^2$, this definition is equivalent to 
\begin{align*}
&\int_{\R^d}\langle\pa_{xx}G(x,\mu)\xi(x),\xi(x)\rangle\dd\mu(x)\\
& + \int_{\R^d\times\R^d}\langle\pa_{x\mu}G(x,\mu,\tilde x)\xi(x),\xi(\tilde x)\rangle\dd\mu(x)\dd\mu(\tilde x)\ge 0,
\end{align*}
for all $\mu\in\sP_2(\R^d)$ and for all $\xi\in C_c(\R^d;\R^d)$.
\item Based on \cite[Definition 2.7]{FourAuthor}, we say that $G$ is \emph{displacement semi-monotone} or \emph{displacement $\alpha$-monotone}, if there exists $\alpha\in\R$ such that $(x,\mu)\mapsto G(x,\mu)+\frac{\alpha}{2}|x|^2$ is displacement monotone.
\end{enumerate}
\end{defin}

For the corresponding Hamiltonians, we can define the displacement monotonicity condition as follows.
\begin{defin}\label{def:disp_H}
Let $H:\R^d\times\sP_2(\R^d)\times\R^d\to\R$ be such that $H(\cdot,\mu,\cdot)\in C^1(\R^d\times\R^d)$ for all $\mu\in\sP_2(\R^d)$. We say that $H$ is displacement monotone, if 
\begin{align}\label{def:disp_H-1st}
&-\int_{\R^d\times\R^d} [\pa_xH(x,\mu,p^1(x))-\pa_x H(y,\nu,p^2(y))]\cdot (x-y)\dd \gamma(x,y)\\ 
\nonumber&+ \int_{\R^d\times\R^d} [\pa_p H(x,\mu,p^1(x))-\pa_p H(y,\nu,p^2(x))]\cdot (p^1(x)-p^2(y))\dd \gamma(x,y),
\end{align}
for all $\mu,\nu\in\sP_2(\R^d)$, $\gamma\in\Pi(\mu,\nu)$ and for all $p^1,p^2 \in C_b(\R^d;\R^d)$.
\end{defin}

\begin{rmk}
\begin{enumerate}
\item Suppose that $H:\R^d\times\sP_2(\R^d)\times\R^d\to\R$ is fully $C^2$, strictly convex in the $p$-variable and satisfies
\begin{align}\label{def:disp_H-2nd}
&\int_{\R^d\times\R^d}\left[\partial_{x\mu}H(x,\mu,\tilde x,p(x))v(\tilde x)+\partial_{xx}H(x,\mu,p(x))v(x)\right]\cdot v(x)\dd\mu(x)\dd\mu(\tilde x)\\
\nonumber&+\frac{1}{4}\int_{\R^d}\left\{\Big|[\pa_{pp}H(x,\mu,p(x))]^{-\frac12}\int_{\R^d}\pa_{p\mu}H(x,\mu,\tilde x,p(x))v(\tilde x)\dd\mu(\tilde x)\Big|^2\right\}\dd \mu(x)\\
\nonumber&\le 0,
\end{align}
for all $\mu\in\sP_2(\R^d)$, for all $p\in C(\R^d;\R^d)$ and for all $v\in L^2_{\mu}(\R^d;\R^d)$. Then $H$ satisfies the displacement monotonicity condition from Definition \ref{def:disp_H}. For the proof of this fact we refer to \cite[Lemma 2.7]{MesMou}.
\end{enumerate}
\end{rmk}

\begin{defin}{\cite[Definition 3.8]{MouZha:2024},\cite[Definition 3.4]{MouZha:2022}}\label{def:anti} Let $\lambda = (\lambda_0,\lambda_1, \lambda_2,\lambda_3)\in\R^4$ be such that $\lambda_0>0$, $\lambda_1\in\R,$ $\lambda_2>0$ and $\lambda_3\ge 0$. Let $G:\R^d\times\sP_2(\R^d)\to\R$ be fully $C^2$. It is said that $G$ is $\lambda$-anti-monotone, if 
\begin{align*}
&\lambda_0 \int_{\R^d}\langle\pa_{xx}G(x,\mu)\xi(x),\xi(x)\rangle\dd\mu(x)\\ 
& + \lambda_1\int_{\R^d\times\R^d}\langle\pa_{x\mu}G(x,\mu,\tilde x)\xi(x),\xi(\tilde x)\rangle\dd\mu(x)\dd\mu(\tilde x)\\
&+ \int_{\R^d}\abs{\pa_{xx}G(x,\mu)\xi(x)}^2\dd\mu(x)
+ \lambda_2 \int_{\R^d}\Big|\int_{\R^d}\pa_{x\mu}G(x,\mu,\tilde x)\xi(\tilde x)\dd\mu(\tilde x)\Big|^2 \dd \mu(x) \\
& \leq \lambda_3\int_{\R^d} \abs{\xi(x)}^2 \dd \mu(x) 
\end{align*}
for all $\mu\in\sP_2(\R^d)$ and for all $\xi\in L^2_{\mu}(\R^d;\R^d)$.
\end{defin}

\section{New Well-Posedness Theories for MFG and master equations}

We impose a set of assumptions which are going to be imposed for our main results. {These are relatively standard assumptions, which appear naturally in the literature on the well-posedness theories for master equations.}

\begin{assump}\label{hyp:G}
Suppose that $G:\R^d\times\sP_2(\R^d)\to\R$ is fully $C^2$, bounded below and is such that
\begin{itemize}
\item $\pa_{xx}G$ is uniformly continuous and it is uniformly bounded by $L^G$ on $\R^d\times\sP_2(\R^d)$; 
\item $\pa_{x\mu}G$ is uniformly continuous and it is uniformly bounded by $L^G$ on $\R^d\times\sP_2(\R^d)\times\R^d$,
\end{itemize}
for some $L^G>0$.
\end{assump}

\begin{assump}\label{hyp:H}
Suppose that $H:\R^d\times\sP_2(\R^d)\times\R^d\to\R$ is fully $C^2$ and satisfies the followings 
\begin{itemize}
\item $\pa_{pp}H$ is uniformly continuous and $\pa_{pp}H(x,\mu,p) \geq c_0^{-1}I$, for some $c_0>0$ and for all $(x,\mu,p)\in \R^d\times\sP_2(\R^d)\times\R^d$; 
\item $\pa_{xp}H$, $\pa_{pp}H, \pa_{xx}H$ are continuous and are uniformly bounded by $L^H$ on $\R^d\times\sP_2(\R^d)\times\R^d$;
\item $\pa_{p\mu}H, \pa_{x\mu}H$ are uniformly continuous and are uniformly bounded by $L^H$ on $\R^d\times\sP_2(\R^d)\times\R^d\times\R^d$;
\item $\partial_p H(x,\mu,p)\cdot p - H(x,\mu,p)\ge -L^H$ for all $(x,\mu,p)\in \R^d\times\sP_2(\R^d)\times\R^d,$
\end{itemize}
for some $L^H>0$.
\end{assump}

\begin{rmk}
\begin{enumerate}
\item When continuity of functions is assumed in the measure variable, this is with respect to the $W_2$ metric.
\item Assumptions \ref{hyp:G} and \ref{hyp:H} from above are the standing assumptions imposed in \cite{BanMesMou}.
\end{enumerate}
\end{rmk}

{
Let us now restate our crucial observation from the introduction in form of a theorem.
\begin{thm}\label{thm:intro1}
Fix any $\alpha \in \R$. The master equation with data $(H,G)$ is well-posed if and only if it is well-posed with data $(H_\alpha, G_\alpha)$. 
\end{thm}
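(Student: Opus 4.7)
The plan is to exhibit an explicit pointwise bijection between classical solutions of the master equation with data $(H,G)$ and those with data $(H_\alpha,G_\alpha)$. Given a classical solution $V$ of \eqref{eq:master} with data $(H,G)$, I would set
\[
W(t,x,\mu) := V(t,x,\mu) + \frac{\alpha}{2}|x|^2 + c(t),
\]
for a scalar function $c:[0,T]\to\R$ to be determined below, and verify that $W$ is a classical solution of the master equation with data $(H_\alpha, G_\alpha)$. The inverse map is given by $V = W - \frac{\alpha}{2}|x|^2 - c(t)$, so the correspondence is manifestly bijective at the level of classical solutions of any reasonable regularity class, and the equivalence in the theorem follows immediately.

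The verification reduces to a handful of elementary identities. Since the additive term $\frac{\alpha}{2}|x|^2+c(t)$ is independent of $\mu$, all Wasserstein derivatives of $W$ coincide with those of $V$; in particular $\partial_\mu W = \partial_\mu V$, $\partial_{\tilde x\mu}W = \partial_{\tilde x\mu}V$, $\partial_{x\mu}W = \partial_{x\mu}V$, and $\partial_{\mu\mu}W = \partial_{\mu\mu}V$. Meanwhile, $\partial_x W(t,y,\mu) = \partial_x V(t,y,\mu) + \alpha y$, and the very definition of $H_\alpha$ in \eqref{def:H_alpha} yields the two key pointwise identities
\[
H_\alpha(x,\mu,\partial_x W(t,x,\mu)) = H(x,\mu,\partial_x V(t,x,\mu)),
\]
\[
\partial_p H_\alpha(\tilde x,\mu,\partial_x W(t,\tilde x,\mu)) = \partial_p H(\tilde x,\mu,\partial_x V(t,\tilde x,\mu)).
\]
Combined with $\partial_\mu W = \partial_\mu V$, the second identity shows that the nonlocal operator $\mathcal N$ associated with $H_\alpha$ and evaluated at $W$ reproduces $\mathcal N V$ computed from $H$. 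The only genuine change comes from the second-order $x$-derivatives: $\tr(\partial_{xx}W) = \tr(\partial_{xx}V) + \alpha d$, so both $\Delta_{{\rm ind}}W$ and $\Delta_{{\rm com}}W$ differ from their $V$-counterparts by the additive constant $\alpha d$.

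Inserting everything into the master equation with data $(H_\alpha, G_\alpha)$ and using that $V$ satisfies the master equation with data $(H,G)$, the residual collapses to the scalar quantity $-c'(t) - \tfrac{\alpha d(\beta^2+\beta_0^2)}{2}$. Choosing
\[
c(t) := \frac{\alpha d(\beta^2+\beta_0^2)}{2}(T-t)
\]
annihilates the residual and simultaneously enforces $c(T)=0$, which is precisely what the terminal condition $W(T,x,\mu) = G(x,\mu) + \tfrac{\alpha}{2}|x|^2 = G_\alpha(x,\mu)$ demands. There is no real obstacle in this argument: the shift is affine in $x$, smooth in $t$, and preserves every growth, regularity, and derivative bound invoked in the definition of a classical solution, so the map is invertible within the relevant solution class. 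The only point worth flagging is that the purely $x$-dependent addition carries no measure content, so all Wasserstein derivatives of $W$, at every order, agree identically with those of $V$; this is what makes the nonlocal and $\mu$-second-order pieces transform cleanly under the canonical change \eqref{def:H_alpha}.
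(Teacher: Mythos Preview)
Your proof is correct and follows exactly the same approach as the paper: the paper's proof simply records the transformation $\tilde V(t,x,\mu) := V(t,x,\mu) + \tfrac{\alpha}{2}|x|^2 - \tfrac{(\beta_0^2+\beta^2)\alpha d}{2}(t-T)$ and states that a direct computation verifies the equivalence, which is precisely the ansatz $W=V+\tfrac{\alpha}{2}|x|^2+c(t)$ with your choice of $c$. You have merely written out the verification that the paper leaves implicit.
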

}
%We now prove Theorem \ref{thm:intro1}. 
%% \begin{thm}\label{thm:main1} 
%% The master equation with data $(H,G)$ is well-posed if and only if it is well-posed with data $(\ti H, \ti G)$ where $\ti H(x,\mu, p):= H(x, \mu, p - \alpha x)$ and $\ti G(x, \mu) := G(x, \mu) + \frac{\alpha}{2}\abs{x}^2$, $\alpha\in\R$. 
%% \end{thm}
\begin{proof}%[Proof of Theorem \ref{thm:intro1}]
Via direct computation we can verify that $V$ is a solution of the master equation with data $(H,G)$ if and only if $\ti V(t,x,\mu) := V(t,x,\mu) +  \frac{\alpha}{2}\abs{x}^2 - \frac{(\beta_0^2+\beta^2)\alpha d}{2}(t-T)$ is a solution of the master equation with data $({H_\alpha, G_\alpha})$. 
\end{proof}

\begin{rmk}
% \begin{enumerate}
Because of the connection between the solvability of the master equation with data $(H,G)$ and $({H_\alpha, G_\alpha})$ described in Theorem \ref{thm:intro1}, the same connection holds true for the solutions to the corresponding finite dimensional mean field games systems as well.
\end{rmk}

\medskip

Recall the definition \eqref{def:H_alpha}. Now we give some sufficient conditions on Hamiltonians $H$ which would result into the displacement monotonicity of the transformed Hamiltonians $H_\alpha$.

\begin{lem}\label{prop:semi-mon}
Let $H$ be fully $C^2$. Then $H_\alpha$ is displacement monotone if and only if
\small
\begin{align}\label{ineq:semi-monot}
&\int_{\R^d}\left[\Big(\partial_{xx}H(x,\mu,p(x)) - 2\alpha\partial_{xp}H(x,\mu,p(x))\Big)v(x)\right]\cdot v(x)\dd\mu(x)\\ %\dd\mu(\tilde x)\\
\nonumber &+ \int_{\R^d\times\R^d} \left[ \Big(\partial_{x\mu}H(x,\mu,\tilde x,p(x)) - 2\alpha\partial_{p\mu}H(x,\mu,\tilde x,p(x)) \Big)v(\tilde x)\right]\cdot v(x) \dd\mu(x)\dd\mu(\tilde x) \\
\nonumber&+\frac{1}{4}\int_{\R^d}\bigg\{\bigg|[\pa_{pp}H(x,\mu,p(x))]^{-\frac12}\bigg[\int_{\R^d}\pa_{p\mu}H(x,\mu,\tilde x,p(x))v(\tilde x)\dd\mu(\tilde x)\\
\nonumber&+2\alpha\pa_{pp}H(x,\mu,p(x))v(x)\bigg]\bigg|^2\bigg\}\dd \mu(x)\\
\nonumber&\le 0,
\end{align}
\normalsize
for all $\mu\in\sP_2(\R^d)$, for all $p\in C(\R^d;\R^d)$ and for all $v\in L^2_{\mu}(\R^d;\R^d)$.
\end{lem}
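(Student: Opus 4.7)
My approach is to reduce Lemma~\ref{prop:semi-mon} to the second-order characterization of displacement monotonicity recalled in \eqref{def:disp_H-2nd}, but now applied to the shifted Hamiltonian $H_\alpha$, and then to translate the resulting inequality back into derivatives of $H$. Since $\partial_{pp}H_\alpha=\partial_{pp}H$, the Hamiltonian $H_\alpha$ inherits the strong convexity in $p$ from the standing hypothesis on $H$, so the second-order characterization (see \cite[Lemma~2.7]{MesMou}) supplies both directions of the equivalence.

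The first step is a direct chain-rule computation. Writing $q:=p-\alpha x$, one obtains
\[
\partial_{pp}H_\alpha=\partial_{pp}H,\quad \partial_{p\mu}H_\alpha=\partial_{p\mu}H,\quad \partial_{xp}H_\alpha=\partial_{xp}H-\alpha\partial_{pp}H,\quad \partial_{x\mu}H_\alpha=\partial_{x\mu}H-\alpha\partial_{p\mu}H,
\]
together with $\partial_{xx}H_\alpha=\partial_{xx}H-\alpha\partial_{xp}H-\alpha(\partial_{xp}H)^T+\alpha^2\partial_{pp}H$, all evaluated at $(x,\mu,q)$. When paired against $v(x)\cdot v(x)$ the two mixed contributions in $\partial_{xx}H_\alpha$ collapse to $-2\alpha\,\partial_{xp}H\,v(x)\cdot v(x)$. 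I would then insert these identities into \eqref{def:disp_H-2nd} written for $H_\alpha$ with a generic test field $\tilde p\in C(\R^d;\R^d)$, and perform the bijective change of variable $p(x):=\tilde p(x)-\alpha x$, which turns the quantifier ``for all $\tilde p$'' into ``for all $p$''.

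The crucial and slightly delicate step is then to reorganise the resulting quadratic expression so that it matches \eqref{ineq:semi-monot}. Setting
\[
A(x):=[\partial_{pp}H]^{-1/2}\!\int_{\R^d}\partial_{p\mu}H\,v(\tilde x)\,\dd\mu(\tilde x),\qquad B(x):=2\alpha\,[\partial_{pp}H]^{1/2}v(x),
\]
and using the symmetry and positive-definiteness of $\partial_{pp}H$ (so that $[\partial_{pp}H]^{-1/2}[\partial_{pp}H]^{1/2}=I$), the enlarged square in \eqref{ineq:semi-monot} can be expanded as $\frac{1}{4}|A+B|^2=\frac{1}{4}|A|^2+\frac{1}{2}A\cdot B+\frac{1}{4}|B|^2$. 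A short calculation yields $\frac{1}{2}A\cdot B=\alpha\bigl(\int_{\R^d}\partial_{p\mu}H\,v(\tilde x)\,\dd\mu(\tilde x)\bigr)\cdot v(x)$ and $\frac{1}{4}|B|^2=\alpha^2\,\partial_{pp}H\,v(x)\cdot v(x)$. Comparing with the inequality produced in the previous paragraph now matches the coefficients exactly: the $\alpha^2\partial_{pp}H$ piece from $\partial_{xx}H_\alpha$ is absorbed into $\frac{1}{4}|B|^2$, while the cross term $\frac{1}{2}A\cdot B$ precisely converts the $-\alpha\partial_{p\mu}H$ coefficient coming from $\partial_{x\mu}H_\alpha$ into the required $-2\alpha\partial_{p\mu}H$.

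The main obstacle I anticipate is exactly this final bookkeeping: tracking the symmetric versus non-symmetric contributions of the mixed partial $\partial_{xp}H$ so that the $-2\alpha\partial_{xp}H$ factor appears with the correct sign on the diagonal, and verifying that the cross term in $|A+B|^2$ supplies precisely the $\alpha$ needed to upgrade $-\alpha\partial_{p\mu}H$ to $-2\alpha\partial_{p\mu}H$. Once this algebra is carried out cleanly, the equivalence is automatic, since \eqref{ineq:semi-monot} and the characterization of displacement monotonicity of $H_\alpha$ become literally the same inequality after the change of variable $p=\tilde p-\alpha x$.
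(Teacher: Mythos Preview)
Your approach is essentially the same as the paper's: compute the second derivatives of $H_\alpha$ via the chain rule, substitute into the second-order characterization \eqref{def:disp_H-2nd}, and use the bijection $p\leftrightarrow \tilde p-\alpha x$ to pass between test fields. The only difference is presentational: the paper records the four formulas for $\partial_{xx}H_\alpha,\partial_{x\mu}H_\alpha,\partial_{p\mu}H_\alpha,\partial_{pp}H_\alpha$ and declares the result immediate (deferring the algebraic identity between the ``expanded'' form and the ``completed-square'' form \eqref{ineq:semi-monot} to a subsequent remark), whereas you carry out that completion of the square explicitly via your $A,B$ decomposition. Your bookkeeping is correct and in fact matches the paper's remark that \eqref{ineq:semi-monot} rewrites as \eqref{def:alpha_disp_H}.
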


\begin{proof}
We readily compute
\begin{align*}
\pa_{xx} \ti H(x,\mu, p) &= \pa_{xx} H(x,\mu,p-\alpha x) - 2\alpha \Real (\pa_{xp} H(x,\mu,p-\alpha x))\\
& + \alpha^2 \pa_{pp} H(x,\mu,p-\alpha x), \\
\pa_{x\mu} \ti H(x,\mu,\cdot, p) &= \pa_{x\mu} H(x,\mu,\cdot,p-\alpha x) - \alpha \pa_{p\mu} H(x,\mu,\cdot,p-\alpha x),\\
\pa_{p\mu} \ti H(x,\mu,\cdot, p) &= \pa_{p\mu} H(x,\mu,\cdot, p-\alpha x),\\
\pa_{pp} \ti H(x,\mu, p) &= \pa_{pp} H(x,\mu, p-\alpha x).
\end{align*}
The result now immediately follows by writing the inequality \eqref{def:disp_H-2nd} for $\ti H$ in terms of $H$, after noting that we may replace $\Real (\pa_{xp} H)$ with $\pa_{xp} H$ since the quadratic form induced by a skew-symmetric operator is null.  
\end{proof}

\begin{rmk}
The inequality in \eqref{ineq:semi-monot} can be equivalently rewritten as 
\begin{align}\label{def:alpha_disp_H}
&\int_{\R^d\times\R^d}\left[\partial_{x\mu}H(x,\mu,\tilde x,p(x))v(\tilde x) -\alpha \partial_{p\mu}H(x,\mu,\tilde x,p(x))v(\tilde x)\right]\cdot v(x)\dd\mu(x)\dd\mu(\tilde x)\\ 
\nonumber&+\int_{\R^d}\big[\partial_{xx}H(x,\mu,p(x))v(x)-2\alpha \partial_{xp}H(x,\mu,p(x))v(x)\\
\nonumber& + \alpha^2\partial_{pp}H(x,\mu,p(x))v(x) \big]\cdot v(x)\dd\mu(x)\\
\nonumber&+\frac{1}{4}\int_{\R^d}\left\{\Big|[\pa_{pp}H(x,\mu,p(x))]^{-\frac12}\int_{\R^d}\pa_{p\mu}H(x,\mu,\tilde x,p(x))v(\tilde x)\dd\mu(\tilde x)\Big|^2\right\}\dd \mu(x)\\
\nonumber &\le 0,
\end{align}
for all $\mu\in\sP_2(\R^d)$, for all $p\in C(\R^d;\R^d)$ and for all $v\in L^2_{\mu}(\R^d;\R^d)$. This is the exact same condition as \cite[(5.10)]{MouZha:2022}. 
\end{rmk}

\medskip

We introduce the following notations. %With the abuse of notation we denote
$$
\underline{\kappa}(\pa_{xp}H):=\inf_{(x,\mu,p)\in\R^d\times\sP_2(\R^d)\times\R^d}\lambda_{\min}(\Real \pa_{xp}H(x,\mu,p)),
$$
{where for $A\in\R^{d\times d}$, we adopt the notation $\Real (A) := (A+A^\top)/2$ and for $A\in\R^{d\times d}$ symmetric $\lambda_{\min}(A)$ stands for its smallest eigenvalue.}
Furthermore, to denote the suprema of the standard $2$-matrix norms, we use the notation
\begin{align*}
&\abs{\pa_{x\mu} H}:= \sup_{(x,\mu,p,\tilde x)\in\R^d\times\sP_2(\R^d)\times\R^d\times\R^d}\abs{\pa_{x\mu} H(x,\mu,p,\tilde x)};\\ 
&\abs{\pa_{p\mu} H}:=  \sup_{(x,\mu,p,\tilde x)\in\R^d\times\sP_2(\R^d)\times\R^d\times\R^d}\abs{\pa_{p\mu} H(x,\mu,p,\tilde x)};\\ 
&\abs{\pa_{xx} H}:=\sup_{(x,\mu,p)\in\R^d\times\sP_2(\R^d)\times\R^d}\abs{\pa_{xx} H(x,\mu,p)},
\end{align*}
and so on for similar quantities. Now, we can formulate the {second main result of our paper}. 
\begin{thm}\label{thm:main2}
Suppose that $H:\R^d\times\sP_2(\R^d)\times\R^d\to\R$ satisfies 
$$\pa_{pp}H(x,\mu,p) \geq c_0^{-1}I,$$ for some $c_0>0$ and for all $(x,\mu,p)\in \R^d\times\sP_2(\R^d)\times\R^d$. Suppose that $\underline{\kappa}(\pa_{xp}H), \abs{\pa_{pp} H}, \abs{\pa_{xx} H}, \abs{\pa_{p\mu} H}$ and $\abs{\pa_{x\mu} H}$ are finite. Define 
$$L_{our}^{H}:=\abs{\pa_{x\mu} H}  + \frac14 c_0\abs{\pa_{p\mu} H}^2+ \abs{\pa_{xx} H}.$$ 
Suppose that $\underline{\kappa}(\pa_{xp} H) \geq \frac12\abs{\pa_{p\mu}H} +  \sqrt{\abs{\pa_{pp} H} L_{our}^{H}}$. Then $H_\alpha$ is displacement monotone for any 
$$\alpha\in \left[\alpha^H_-,\alpha^H_+\right],$$
where
$$
\alpha^H_\pm:= \frac{\underline{\kappa}(\pa_{xp} H) - \frac12\abs{\pa_{p\mu}H}\pm \sqrt{\left(\underline\kappa(\pa_{xp} H) - \frac12\abs{\pa_{p\mu}H}\right)^2-\abs{\pa_{pp} H} L_{our}^{H}}}{\abs{\pa_{pp} H}}.
$$
In particular we have the result for $\alpha := \frac{\underline{\kappa}(\pa_{xp} H) - \frac12\abs{\pa_{p\mu}H}}{\abs{\pa_{pp} H}}$.
\end{thm}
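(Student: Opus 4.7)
\textbf{Proof plan for Theorem \ref{thm:main2}.} The plan is to invoke Lemma \ref{prop:semi-mon}: showing displacement monotonicity of $H_\alpha$ amounts to verifying \eqref{def:alpha_disp_H}, which is the equivalent form of \eqref{ineq:semi-monot}. I would work directly with \eqref{def:alpha_disp_H}, bound each of its terms by a pointwise constant times $\|v\|_{L^2_\mu}^2$, and then recognize that the resulting upper bound is a quadratic in $\alpha$ whose roots are precisely $\alpha^H_{\pm}$. For $\alpha$ between these roots, the quadratic is non-positive, which gives the claim.

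Concretely, write $\|v\|^2 := \int_{\R^d}|v(x)|^2\dd\mu(x)$. The first line of \eqref{def:alpha_disp_H} contains $\int \partial_{xx}H\, v\cdot v\dd\mu$ and $\int\!\int \partial_{x\mu}H\, v(\tilde x)\cdot v(x) \dd\mu\dd\mu$, bounded by $|\pa_{xx}H|\,\|v\|^2$ and $|\pa_{x\mu}H|\,\|v\|^2$ respectively (the latter by Cauchy--Schwarz on the product measure). The $\alpha$-linear term involving $\pa_{p\mu}H$ contributes at most $\alpha\,|\pa_{p\mu}H|\,\|v\|^2$; the $\alpha^2$ term is bounded by $\alpha^2|\pa_{pp}H|\,\|v\|^2$. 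For the $-2\alpha\,\pa_{xp}H\,v\cdot v$ term, because the quadratic form only sees the symmetric part, one gets $-2\alpha\,\underline{\kappa}(\pa_{xp}H)\,\|v\|^2$ as an upper bound for $\alpha\ge 0$. Finally, the last ``squared $\pa_{p\mu}H$'' term is bounded using $[\pa_{pp}H]^{-1}\le c_0 I$ (which follows from $\pa_{pp}H\ge c_0^{-1}I$) and Cauchy--Schwarz on the inner integral, giving
\[
\frac{1}{4}\int_{\R^d}\Big|[\pa_{pp}H]^{-1/2}\!\!\int_{\R^d}\pa_{p\mu}H\, v(\tilde x)\dd\mu(\tilde x)\Big|^2\dd\mu(x)\le \frac{c_0}{4}|\pa_{p\mu}H|^2\,\|v\|^2.
\]
Summing, the LHS of \eqref{def:alpha_disp_H} is at most $Q(\alpha)\,\|v\|^2$ where
\[
Q(\alpha):=|\pa_{pp}H|\,\alpha^2+\big(|\pa_{p\mu}H|-2\underline{\kappa}(\pa_{xp}H)\big)\alpha + L^H_{our},
\]
with $L^H_{our}=|\pa_{x\mu}H|+\tfrac14 c_0|\pa_{p\mu}H|^2+|\pa_{xx}H|$ exactly as in the statement.

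To finish, I would observe that $Q$ is a convex quadratic in $\alpha$, and the hypothesis $\underline{\kappa}(\pa_{xp}H)\ge \tfrac12|\pa_{p\mu}H|+\sqrt{|\pa_{pp}H|\,L^H_{our}}$ is exactly the non-negativity of the discriminant $\big(\underline{\kappa}(\pa_{xp}H)-\tfrac12|\pa_{p\mu}H|\big)^2-|\pa_{pp}H|\,L^H_{our}\ge 0$. Thus $Q$ has real roots $\alpha^H_{\pm}$ matching the formulas in the theorem, and $Q(\alpha)\le 0$ for $\alpha\in[\alpha^H_-,\alpha^H_+]$, which via Lemma \ref{prop:semi-mon} yields the displacement monotonicity of $H_\alpha$. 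The midpoint of the interval, $\alpha=\big(\underline{\kappa}(\pa_{xp}H)-\tfrac12|\pa_{p\mu}H|\big)/|\pa_{pp}H|$, is the vertex of $Q$ and lies in $[\alpha^H_-,\alpha^H_+]$ (and is non-negative under the standing discriminant assumption), giving the ``in particular'' statement.

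The computation is essentially mechanical once Lemma \ref{prop:semi-mon} is in hand. The only subtle step is the handling of the squared quadratic form involving $[\pa_{pp}H]^{-1/2}$, where one must pass from an operator bound on $[\pa_{pp}H]^{-1}$ to a scalar bound via the convexity constant $c_0$, and where using Cauchy--Schwarz on the inner integral in $\tilde x$ (rather than a pointwise estimate) is what produces the $\tfrac{c_0}{4}|\pa_{p\mu}H|^2$ contribution to $L^H_{our}$. Beyond that, the matching between the estimate's coefficients and the quadratic whose roots define $\alpha^H_{\pm}$ is algebra: the discriminant condition on $\underline{\kappa}(\pa_{xp}H)$ in the hypothesis is engineered exactly so that the intermediate quadratic $Q$ has real roots.
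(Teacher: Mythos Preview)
Your proposal is correct and follows essentially the same approach as the paper's proof: both invoke Lemma~\ref{prop:semi-mon} (in the equivalent form \eqref{def:alpha_disp_H}), bound each term by operator norms and the convexity constant $c_0$ to obtain the same quadratic $Q(\alpha)=|\pa_{pp}H|\,\alpha^2-2\big(\underline{\kappa}(\pa_{xp}H)-\tfrac12|\pa_{p\mu}H|\big)\alpha+L^H_{our}$, and conclude via the discriminant condition. Your explicit remark that the bounds on the $\alpha$-linear terms require $\alpha\ge 0$ (automatically ensured since $\alpha^H_-\ge 0$ under the hypothesis) is a nice point of care that the paper leaves implicit.
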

\begin{proof}
For $\alpha \in \left[\alpha^H_-,\alpha^H_+\right]$, $\mu\in\sP_2(\R^d)$, $p\in C(\R^d;\R^d)$ and for $v\in L^2_{\mu}(\R^d;\R^d)$ normalized, i.e. $\int_{\R^d}|v(x)|^2 d\mu=1$, we compute
\[
&\int_{\R^d\times\R^d}\left[\partial_{x\mu}H(x,\mu,\tilde x,p(x))v(\tilde x) -\alpha \partial_{p\mu}H(x,\mu,\tilde x,p(x))v(\tilde x)\right]\cdot v(x)\dd\mu(x)\dd\mu(\tilde x) \\
& +\int_{\R^d}\big[\partial_{xx}H(x,\mu,p(x))v(x) -2\alpha \partial_{xp}H(x,\mu,p(x))v(x)\\
& + \alpha^2\partial_{pp}H(x,\mu,p(x))v(x) \big]\cdot v(x)\dd\mu(x)\\
\nonumber&+\frac{1}{4}\int_{\R^d}\left\{\Big|[\pa_{pp}H(x,\mu,p(x))]^{-\frac12}\int_{\R^d}\pa_{p\mu}H(x,\mu,\tilde x,p(x))v(\tilde x)\dd\mu(\tilde x)\Big|^2\right\}\dd \mu(x) \\
&\leq \int_{\R^d\times\R^d}[\abs{\partial_{x\mu}H} +\alpha \abs{\partial_{p\mu}H} +\abs{\partial_{xx}H} 
-2\alpha \underline{\kappa}(\partial_{xp}H) + \alpha^2\abs{\partial_{pp}H} ]\dd\mu(x)\dd\mu(\tilde x)\\
\nonumber&+\frac{c_0}{4}\int_{\R^d}\left\{\Big|\int_{\R^d}\abs{\partial_{p\mu}H}\dd\mu(\tilde x)\Big|^2\right\}\dd \mu(x) \\
&= \abs{\partial_{xx}H}
-2\alpha \underline{\kappa}(\partial_{xp}H) + \alpha^2\abs{\partial_{pp}H} +
\abs{\partial_{x\mu}H} +\alpha \abs{\partial_{p\mu}H}+\frac{c_0\abs{\partial_{p\mu}H}^2}{4} \\
&= \abs{\partial_{pp}H}\alpha^2 -2\left(\underline{\kappa}(\partial_{xp}H) - \frac12\abs{\pa_{p\mu}H}\right)\alpha  +  \abs{\partial_{xx}H} + \abs{\partial_{x\mu}H}+\frac{c_0\abs{\partial_{p\mu}H}^2}{4} \\
&= \abs{\partial_{pp}H}\alpha^2 -2 \left(\underline{\kappa}(\partial_{xp}H) - \frac12\abs{\pa_{p\mu}H}\right)\alpha + L_{our}^{H} \\
& \le 0,
\]
where in the last inequality we used the sign of the quadratic expression.
\end{proof}
As an immediate consequence of Theorem \ref{thm:main2}, we have the well-posedness result {in Corollary \ref{thm:intro3}}.

\begin{proof}[Proof of {Corollary} \ref{thm:intro3}]
We see that all second order derivatives of $\tilde H$ and $H$ match, except the ones involving $\pa_{xp}$, for which we have
$$
\pa_{xp}\tilde H = \pa_{xp}H +\alpha I.
$$
By the uniform bounds on the corresponding second order derivatives of $H$, we see that for $\alpha$ sufficiently large, $\tilde H$ fulfills the assumptions of Theorem \ref{thm:main2}. Increasing $\alpha$ further if necessary, we can ensure that $G$ is displacement $\alpha$-monotone. Having $G$ displacement $\alpha$-monotone and $H_\alpha$ displacement monotone would result via Theorem \ref{thm:intro1} in the desired global well-posedness result for the master equation.
\end{proof}

\subsection{Our results and previous results on the master equation involving displacement {\emph{semi-monotone}} data}\label{subsec:semi-mon}

We notice that the inequality \eqref{ineq:semi-monot} is precisely the inequality (5.10) from \cite{MouZha:2022}. This means in particular that \cite[Theorem 5.6]{MouZha:2022} is a direct consequence of Theorem \ref{thm:intro1} and Remark \ref{prop:semi-mon} above.

We note that Theorem \ref{thm:intro1} shows that we have a global well-posedness theory for the master equation as long as $G$ is displacement semi-monotone and the corresponding $\ti H$ is displacement monotone. In particular, it is enough for these to satisfy the `first order' monotonicity conditions, in the sense of Definition \ref{def:disp_G}(1) and \eqref{def:disp_H-1st}. Therefore, Theorem \ref{thm:intro1} together with the well-posedness results from \cite{BanMesMou} provide a more general result than the one in \cite[Theorem 5.6]{MouZha:2022}.

\subsection{Our results and previous results on the master equation involving {\emph{anti-monotone}} data}
Our first objective in this subsection is to show that any function $G:\R^d\times\sP_2(\R^d)\to\R$ which is $\lambda$-anti-monotone in the sense of Definition \ref{def:anti} is actually displacement $\alpha$-monotone in the sense of Definition \ref{def:disp_G}(2), where $\alpha$ can be computed  explicitly  in terms of $\lambda=(\lambda_0,\lambda_1, \lambda_2,\lambda_3)$. We start with some preparatory results.

\begin{rmk}\label{prop:anti_G}
$G$ is $\lambda$-anti-monotone in the sense of Definition \ref{def:anti} with $\lambda=(\lambda_0,\lambda_1, \lambda_2,\lambda_3)$ if and only if
\begin{align*}%\label{ineq:anti1}
&\int_{\R^d} \bigg\{\abs{\pa_{xx}G(x,\mu)\xi(x) + \frac {\lambda_0} 2 \xi(x)}^2\\ 
&+ \lambda_2\abs{ \int_{\R^d} \pa_{x\mu}G(x,\mu,\tilde x)\xi(\tilde x)\dd\mu(\tilde x) + \frac {\lambda_1} {2\lambda_2} \xi(x)}^2\bigg\} \dd\mu(x) \\ 
\nonumber&\leq \(\lambda_3 + \(\frac {\lambda_0} 2\)^2 + \lambda_2\(\frac {\lambda_1} {2\lambda_2}\)^2\)\int_{\R^d} \abs{\xi(x)}^2 \dd \mu(x).
\end{align*}
\end{rmk}
\begin{proof}
This is immediate by an algebraic manipulation after computing the squares.
\end{proof}

\begin{prop}\label{cor:antiImpliesSemi}
If $G$ is $\lambda$-anti monotone in the sense of Definition \ref{def:anti} with $\lambda=(\lambda_0,\lambda_1, \lambda_2,\lambda_3)$, then 
\[
&\abs{\int_{\R^d\times\R^d}\langle\pa_{x\mu}G(x,\mu,\tilde x)\xi(x),\xi(\tilde x)\rangle\dd\mu(x)\dd\mu(\tilde x)}\\    
&\qquad \qquad\leq \(\frac {\abs{\lambda_1}} {2\lambda_2}  + \sqrt{\frac{\lambda_3}{\lambda_2} + \frac{{\lambda_0} ^2}{4\lambda_2} + {\(\frac {\lambda_1} {2\lambda_2}\)^2}} \)\int_{\R^d} \abs{\xi(x)}^2 \dd \mu(x)
\]
and 
\[
&\abs{\int_{\R^d}  \inn{\pa_{xx}G(x,\mu)\xi(x)} { \xi(x)}\dd\mu(x)}\\   
&\qquad \qquad \leq \(\frac{\abs{\lambda_0}}2 + 
\sqrt{\lambda_3 + \(\frac {\lambda_0} 2\)^2 + \lambda_2\(\frac {\lambda_1} {2\lambda_2}\)^2}  \) \int_{\R^d} \dd\mu(x) \abs{\xi(x)}^2. 
\]
In particular $G$ is displacement $\alpha_\lambda$-monotone, with 
\small
$$
\alpha_\lambda\ge\max\left\{\frac {\abs{\lambda_1}} {2\lambda_2}  + \sqrt{\frac{\lambda_3}{\lambda_2} + \frac{{\lambda_0} ^2}{4\lambda_2} + {\(\frac {\lambda_1} {2\lambda_2}\)^2}}; \frac{\abs{\lambda_0}}2 + 
\sqrt{\lambda_3 + \(\frac {\lambda_0} 2\)^2 + \lambda_2\(\frac {\lambda_1} {2\lambda_2}\)^2}\right\}.
$$ 
\normalsize
\end{prop}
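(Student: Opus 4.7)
\textbf{Proof proposal for Proposition \ref{cor:antiImpliesSemi}.} The plan is to extract two separate $L^2$-type bounds from the rewritten anti-monotonicity inequality in Remark \ref{prop:anti_G}, convert each to a quadratic-form bound by Cauchy--Schwarz, and then clean up with the triangle inequality. Write $C:=\lambda_3 + \left(\frac{\lambda_0}{2}\right)^2 + \lambda_2\left(\frac{\lambda_1}{2\lambda_2}\right)^2$ for the constant appearing on the right-hand side of Remark \ref{prop:anti_G}. Since both terms on the left-hand side of that inequality are nonnegative, I would first split it into the two separate bounds
\[
\int_{\R^d}\left|\pa_{xx}G(x,\mu)\xi(x) + \tfrac{\lambda_0}{2}\xi(x)\right|^2\dd\mu(x) \le C\int_{\R^d}|\xi(x)|^2\dd\mu(x),
\]
\[
\int_{\R^d}\left|\int_{\R^d}\pa_{x\mu}G(x,\mu,\tilde x)\xi(\tilde x)\dd\mu(\tilde x) + \tfrac{\lambda_1}{2\lambda_2}\xi(x)\right|^2\dd\mu(x) \le \tfrac{C}{\lambda_2}\int_{\R^d}|\xi(x)|^2\dd\mu(x).
\]

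Next, I would pair each of these with $\xi$ under Cauchy--Schwarz in $L^2_\mu$. For the first, this gives
\[
\left|\int_{\R^d}\langle \pa_{xx}G(x,\mu)\xi(x),\xi(x)\rangle\dd\mu(x) + \tfrac{\lambda_0}{2}\int_{\R^d}|\xi(x)|^2\dd\mu(x)\right| \le \sqrt{C}\int_{\R^d}|\xi(x)|^2\dd\mu(x),
\]
and for the second (noting that the Fubini-type identity converts the paired inner product into the desired double integral against $\pa_{x\mu}G$),
\[
\left|\int_{\R^d\times\R^d}\langle \pa_{x\mu}G(x,\mu,\tilde x)\xi(\tilde x),\xi(x)\rangle\dd\mu(\tilde x)\dd\mu(x) + \tfrac{\lambda_1}{2\lambda_2}\int_{\R^d}|\xi(x)|^2\dd\mu(x)\right| \le \sqrt{\tfrac{C}{\lambda_2}}\int_{\R^d}|\xi(x)|^2\dd\mu(x).
\]
A triangle inequality applied to each of these, absorbing the centering terms $\tfrac{\lambda_0}{2}\xi$ and $\tfrac{\lambda_1}{2\lambda_2}\xi$, yields the two displayed bounds in the statement of the proposition once one expands $\sqrt{C}$ and $\sqrt{C/\lambda_2}$ in terms of $\lambda=(\lambda_0,\lambda_1,\lambda_2,\lambda_3)$.

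For the ``In particular'' clause, I would recall that Definition \ref{def:disp_G}(2) asks for $(x,\mu)\mapsto G(x,\mu)+\tfrac{\alpha}{2}|x|^2$ to be displacement monotone, i.e.
\[
\int_{\R^d}\langle \pa_{xx}G\,\xi,\xi\rangle\dd\mu + \int_{\R^d\times\R^d}\langle \pa_{x\mu}G\,\xi(x),\xi(\tilde x)\rangle\dd\mu\dd\mu + \alpha\int_{\R^d}|\xi|^2\dd\mu \ge 0,
\]
so combining the two absolute-value bounds from the previous step immediately gives displacement $\alpha_\lambda$-monotonicity for any $\alpha_\lambda$ at least the sum (and in particular the maximum, as stated) of the two constants appearing there.

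I do not anticipate a substantive obstacle: the argument is essentially a Cauchy--Schwarz plus triangle-inequality exercise, with the only care needed being bookkeeping of the constants under the square roots and verification that the Fubini swap is legitimate for $\xi\in L^2_\mu(\R^d;\R^d)$ under Assumption \ref{hyp:G} on $\pa_{x\mu}G$ (which is uniformly bounded, so the double integral is absolutely convergent).
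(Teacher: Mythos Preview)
Your proof is correct and arrives at exactly the same constants as the paper, but by a genuinely more direct route. The paper's argument proceeds pointwise via the parametric inequality $|\langle v,w\rangle|\le \frac{C+2}{2}|v|^2+\frac{1}{2C}|v+w|^2$, integrates, invokes Remark~\ref{prop:anti_G} to control the $|v+w|^2$ term, and then optimizes over $C>0$; because this optimization involves dividing by $|\lambda_1|$ (respectively $|\lambda_0|$), the paper must also treat the degenerate case $\lambda_1=0$ by a limiting argument. Your approach bypasses all of this: you drop one nonnegative term in Remark~\ref{prop:anti_G} to get an $L^2_\mu$ bound on the shifted vector field, apply Cauchy--Schwarz in $L^2_\mu$ once, and then a single triangle inequality. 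This is cleaner, requires no optimization, and works uniformly in $\lambda_1$. The only (minor) thing the paper's pointwise argument buys is a slightly stronger conclusion, namely a bound on $\int_{\R^d}\bigl|\langle\,\cdot\,,\xi\rangle\bigr|\,\dd\mu$ rather than $\bigl|\int_{\R^d}\langle\,\cdot\,,\xi\rangle\,\dd\mu\bigr|$, but the stated proposition only requires the latter.

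One small slip in your last paragraph: you write that displacement $\alpha_\lambda$-monotonicity holds for any $\alpha_\lambda$ at least the sum, ``and in particular the maximum''. That implication goes the wrong way (sum $\ge$ max, not conversely). What the argument actually gives is that the \emph{sum} of the two constants suffices; the proposition's phrasing ``$\alpha_\lambda\ge\max\{\ldots\}$'' should be read as asserting the existence of a working $\alpha_\lambda$ which in particular dominates the maximum, not that the maximum itself suffices. Your computation is fine; just adjust the wording.
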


\begin{proof}
Let us recall that in the definition of $\lambda$-anti-monotonicity we have $\lambda_0>0$, $\lambda_2>0$, $\lambda_3\ge 0$ and there is no sign restriction on $\lambda_1$.

First, let us suppose that $\lambda_1 \neq 0$. %{[Since both sides are continuous in $\lambda_1$ we can assume that $\lambda_1 \neq 0$.]}

Note that for any $v, w\in\R^d$ and any $C > 0$ we have
\[
\abs{\inn vw} \leq \frac{C+2}2 \abs{v}^2 + \frac{1}{2C}\abs{v+w}^2. 
\] 
With the choice of $v:=\frac {\lambda_1} {2\lambda_2} \xi(x)$ and $w:=\int_{\R^d} \pa_{x\mu}G(x,\mu,\tilde x)\xi(\tilde x)\dd\mu(\tilde x)$, we obtain 
\[
&\int_{\R^d}  \abs{\left\langle\int_{\R^d} \pa_{x\mu}G(x,\mu,\tilde x)\xi(\tilde x)\dd\mu(\tilde x),\frac {\lambda_1} {2\lambda_2} \xi(x)\right\rangle}\dd\mu(x)  \\
&\leq \int_{\R^d} \bigg\{ \(\frac {C}2 + 1\) \abs{\frac {\lambda_1} {2\lambda_2} \xi(x)}^2\\
& + \frac{1}{2C}\abs{\int_{\R^d}  \pa_{x\mu}G(x,\mu,\tilde x)\xi(\tilde x)\dd\mu(\tilde x) + \frac {\lambda_1} {2\lambda_2} \xi(x)}^2\bigg\}\dd\mu(x) \\
&= \int_{\R^d} \bigg\{ \(\frac {C}2 + 1\) \abs{\frac {\lambda_1} {2\lambda_2} \xi(x)}^2\\
& + \frac{1}{2C\lambda_2} \(\lambda_2 \abs{\int_{\R^d}  \pa_{x\mu}G(x,\mu,\tilde x)\xi(\tilde x)\dd\mu(\tilde x) + \frac {\lambda_1} {2\lambda_2} \xi(x)}^2 \)\bigg\}\dd\mu(x) \\
&\leq \int_{\R^d} \bigg\{ \(\frac {C}2 + 1\) \abs{\frac {\lambda_1} {2\lambda_2} \xi(x)}^2\\ 
&+ \frac{1}{2C \lambda_2} \(\lambda_3 + \(\frac {\lambda_0} 2\)^2 + \lambda_2\(\frac {\lambda_1} {2\lambda_2}\)^2\) \abs{\xi(x)}^2 \bigg\}\dd\mu(x)
\]
where the last inequality follows from Proposition \ref{prop:anti_G}. Hence,
\small
\[
&\int_{\R^d} \abs{\left\langle\int_{\R^d} \pa_{x\mu}G(x,\mu,\tilde x)\xi(\tilde x)\dd\mu(\tilde x),\xi(x)\right\rangle} \dd\mu(x)  \\
&\leq \( \(\frac {C}2 + 1\) \frac {\abs{\lambda_1}} {2\lambda_2} + \frac{1}{C {\abs{\lambda_1}}} \(\lambda_3 + \(\frac {\lambda_0} 2\)^2 + \lambda_2\(\frac {\lambda_1} {2\lambda_2}\)^2\) \)\int_{\R^d} \abs{\xi(x)}^2 \dd \mu(x) \\
&= \( \frac {\abs{\lambda_1}} {2\lambda_2}  + \frac {C{\abs{\lambda_1}}} {4\lambda_2} + \frac{1}{C {\abs{\lambda_1}}} \(\lambda_3 + \(\frac {\lambda_0} 2\)^2 + \lambda_2\(\frac {\lambda_1} {2\lambda_2}\)^2\) \)\int_{\R^d} \abs{\xi(x)}^2 \dd \mu(x).    
\]
\normalsize
We now take $C = \frac{1}{\abs{\lambda_1}}\sqrt{\({\lambda_3 + \(\frac {\lambda_0} 2\)^2 + \lambda_2\(\frac {\lambda_1} {2\lambda_2}\)^2}\){(4\lambda_2)}}$ to obtain 
\[
&\int_{\R^d} \abs{\left\langle\int_{\R^d} \pa_{x\mu}G(x,\mu,\tilde x)\xi(\tilde x)\dd\mu(\tilde x),\xi(x)\right\rangle} \dd\mu(x)  \\
&\leq \(\frac {\abs{\lambda_1}} {2\lambda_2}  +2 \sqrt{\frac{\lambda_3 + \(\frac {\lambda_0} 2\)^2 + \lambda_2\(\frac {\lambda_1} {2\lambda_2}\)^2}{4\lambda_2}} \)\int_{\R^d} \abs{\xi(x)}^2 \dd \mu(x) \\
&= \(\frac {\abs{\lambda_1}} {2\lambda_2}  + \sqrt{\frac{\lambda_3}{\lambda_2} + \frac{{\lambda_0} ^2}{4\lambda_2} + {\(\frac {\lambda_1} {2\lambda_2}\)^2}} \)\int_{\R^d} \abs{\xi(x)}^2 \dd \mu(x)
\]
{Now, as the left hand side of this estimate is continuous at $\lambda_1=0$, we can send $\lambda_1\to 0$, and conclude the claim for general $\lambda_1\in\R.$}

In the same manner with the choice of $v:=\frac{\lambda_0}2 \xi(x)$ and $w:=\pa_{xx}G(x,\mu)\xi(x)$, for $C>0$ arbitrary we get
\small
\[
&\int_{\R^d}  \abs{\inn{\pa_{xx}G(x,\mu)\xi(x)} { \xi(x)}}\dd\mu(x) \\
&\leq \frac 2{\abs{\lambda_0}}  \int_{\R^d} \(\frac{C+2}2 \abs{\frac{\lambda_0}2 \xi(x)}^2 + \frac{1}{2C} \abs{\pa_{xx}G(x,\mu)\xi(x) + \frac {\lambda_0} 2 \xi(x)}^2  \)\dd\mu(x) \\
&\leq \frac 2{\abs{\lambda_0}}  \(\frac{C+2}2 \(\frac{\lambda_0^2}4\)  + \frac{1}{2C} \(\lambda_3 + \(\frac {\lambda_0} 2\)^2 + \lambda_2\(\frac {\lambda_1} {2\lambda_2}\)^2\)  \) \int_{\R^d} \dd\mu(x) \abs{\xi(x)}^2 \\
&=  \(\frac{\abs{\lambda_0}}2 + \frac{C \abs{\lambda_0}}4 + \frac{1}{\abs{\lambda_0}C} \(\lambda_3 + \(\frac {\lambda_0} 2\)^2 + \lambda_2\(\frac {\lambda_1} {2\lambda_2}\)^2\)  \) \int_{\R^d} \dd\mu(x) \abs{\xi(x)}^2 
\]
\normalsize
By taking $C = \frac2{\abs{\lambda_0}}\sqrt{\({\lambda_3 + \(\frac {\lambda_0} 2\)^2 + \lambda_2\(\frac {\lambda_1} {2\lambda_2}\)^2}\)}$ we obtain
the result. 
\end{proof}
\begin{rmk}
In Proposition \ref{cor:antiImpliesSemi} we see that the estimates, and hence the conclusion regarding the displacement $\alpha$-monotonicity, hold true even for $\lambda_0\le0$. Therefore, we might drop the requirement $\lambda_0>0$, and our claims from below will remain true.
\end{rmk}

\begin{cor}\label{cor:anti-semi}
Let $G:\R^d\times\sP_2(\R^d)\to\R$ be $\lambda$-anti-monotone which satisfies Assumption \ref{hyp:G}. Suppose that $H:\R^d\times\sP_2(\R^d)\times\R^d\to\R$ satisfies Assumption \ref{hyp:H} and it is such that $H_{\alpha_\lambda}$ is displacement monotone, where the constant $\alpha_\lambda$ is given in Proposition \ref{cor:antiImpliesSemi}. Then, the master equation \eqref{eq:master} with data $(H,G)$ is globally well-posed.
\end{cor}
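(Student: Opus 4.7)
The plan is to assemble the three ingredients already established in the paper: the anti-to-semi monotonicity transfer of Proposition \ref{cor:antiImpliesSemi}, the canonical transformation equivalence of Theorem \ref{thm:intro1}, and the well-posedness results from \cite{BanMesMou} which apply to fully displacement monotone data. The central observation is that the transformation $(H,G)\mapsto (H_{\alpha_\lambda}, G_{\alpha_\lambda})$ converts the hypotheses of the statement precisely into the hypotheses of those displacement monotone well-posedness results.

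First I would invoke Proposition \ref{cor:antiImpliesSemi}: since $G$ is $\lambda$-anti-monotone, it is displacement $\alpha_\lambda$-monotone in the sense of Definition \ref{def:disp_G}(2). By the very definition of displacement semi-monotonicity and by \eqref{def:H_alpha}, this is exactly the statement that the modified final cost
\[
G_{\alpha_\lambda}(x,\mu) = G(x,\mu) + \frac{\alpha_\lambda}{2}|x|^2
\]
is displacement monotone. Combined with the hypothesis that $H_{\alpha_\lambda}$ is displacement monotone, the transformed pair $(H_{\alpha_\lambda}, G_{\alpha_\lambda})$ is now a fully displacement monotone datum for the master equation.

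Next I would check that the transformed pair still satisfies Assumptions \ref{hyp:G} and \ref{hyp:H}, so that the well-posedness theory of \cite{BanMesMou} is applicable. For $G_{\alpha_\lambda}$, one has $\pa_{xx} G_{\alpha_\lambda} = \pa_{xx} G + \alpha_\lambda I$ and $\pa_{x\mu} G_{\alpha_\lambda} = \pa_{x\mu} G$, so the uniform continuity and bounds (with an updated constant depending on $L^G$ and $\alpha_\lambda$) are preserved; boundedness from below is preserved because $\alpha_\lambda>0$ and we are merely adding a nonnegative quadratic term. For $H_{\alpha_\lambda}(x,\mu,p) = H(x,\mu,p-\alpha_\lambda x)$, the formulas computed in the proof of Lemma \ref{prop:semi-mon} show that $\pa_{pp} H_{\alpha_\lambda} = \pa_{pp} H$ (so uniform convexity in $p$ survives with the same constant $c_0$), and that the other second order derivatives of $H_{\alpha_\lambda}$ are linear combinations of the second order derivatives of $H$ with coefficients depending only on $\alpha_\lambda$, hence still uniformly bounded and uniformly continuous. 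The coercivity-type condition $\pa_p H\cdot p - H \geq -L^H$ for $H_{\alpha_\lambda}$ can be verified by a direct algebraic computation using the definition of $H_{\alpha_\lambda}$ together with the corresponding bound for $H$, at the cost of enlarging the constant.

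With these verifications, I would invoke the global well-posedness theorem for the master equation under Assumptions \ref{hyp:G}, \ref{hyp:H} and full displacement monotonicity of both data, as established in \cite{BanMesMou}, applied to $(H_{\alpha_\lambda}, G_{\alpha_\lambda})$. Finally, Theorem \ref{thm:intro1} transfers this well-posedness back to the original data $(H,G)$, completing the proof. The main point that requires care, and thus the only genuine obstacle, is the bookkeeping in the previous paragraph: one must ensure that the regularity hypotheses on $(H_{\alpha_\lambda}, G_{\alpha_\lambda})$ hold with constants that depend only on $L^G, L^H, c_0$ and $\alpha_\lambda$, so that the cited well-posedness result is indeed applicable; all of this reduces to elementary inequalities once the formulas for the second derivatives of the transformed data are written down.
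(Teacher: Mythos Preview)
Your approach is exactly the paper's: invoke Proposition \ref{cor:antiImpliesSemi} to convert anti-monotonicity into displacement $\alpha_\lambda$-monotonicity of $G$, then use Theorem \ref{thm:intro1} to transfer well-posedness from the displacement monotone pair $(H_{\alpha_\lambda},G_{\alpha_\lambda})$ (where \cite{BanMesMou} applies) back to $(H,G)$. The paper's own proof is a one-line citation of those two results; your write-up simply unpacks it.

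One claim you add, however, is not correct as stated. You assert that the coercivity condition $\pa_p H_{\alpha_\lambda}\cdot p - H_{\alpha_\lambda}\ge -\tilde L$ holds ``at the cost of enlarging the constant''. Writing $q=p-\alpha x$ one gets
\[
\pa_p H_\alpha(x,\mu,p)\cdot p - H_\alpha(x,\mu,p)=\big[\pa_p H(x,\mu,q)\cdot q - H(x,\mu,q)\big]+\alpha\,\pa_p H(x,\mu,q)\cdot x,
\]
and the extra term is not bounded below in general: already for $H(x,p)=\tfrac12|p|^2$ one finds $\pa_p H_\alpha\cdot p - H_\alpha=\tfrac12|p|^2-\tfrac{\alpha^2}{2}|x|^2$. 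The paper does not attempt this verification either; the clean way around it is to observe that whatever use \cite{BanMesMou} makes of this lower bound (typically a lower bound on the value function) can be read off directly from the explicit relation $\tilde V=V+\tfrac{\alpha}{2}|x|^2+\text{const.}$ in the proof of Theorem \ref{thm:intro1}, rather than by reverifying Assumption \ref{hyp:H} for $H_{\alpha_\lambda}$.
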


\begin{proof}
This is a direct consequence of Proposition \ref{cor:antiImpliesSemi} and Theorem \ref{thm:intro1}.
\end{proof}

We would like to conclude our paper by showing that, if $H$ is strictly convex in the $p$-variable, then the main theorem on the global well-posedness of the master equation from \cite[Theorem 7.1]{MouZha:2024} %(informally stated in Theorem \ref{thm:anti-mon_chenchen}) 
is a particular case of our main results {from Corollary \ref{cor:anti-semi}}. For completeness, we informally state this here. 

\begin{thm}{\cite[Theorem 7.1]{MouZha:2024}}\label{thm:anti-mon_chenchen} Suppose that $G:\R^d\times\sP_2(\R^d)$ is smooth enough with uniformly bounded second, third and fourth order derivatives. Suppose that the Hamiltonian $H:\R^d\times\sP_2(\R^d)\times\R^d\to\R$ has the specific factorization 
\begin{align*}
H(x,\mu,p):=\langle A_0x,p\rangle + H_0(x,\mu,p),
\end{align*}
for a constant matrix $A_0\in\R^{d\times d}$ and $H_0:\R^d\times\sP_2(\R^d)\times\R^d\to\R$ smooth enough. Suppose furthermore that $G$ is $\lambda$-anti-monotone and that a special set of specific assumption take place jointly for $\lambda=(\lambda_0,\lambda_1, \lambda_2,\lambda_3)$, the matrix $A_0$ and $H_0$. Then the master equation \eqref{eq:master} is globally well-posed for any $T>0$, in the classical sense.
\end{thm}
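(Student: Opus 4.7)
The plan is to deduce Theorem \ref{thm:anti-mon_chenchen} as a direct corollary of Corollary \ref{cor:anti-semi}, which requires verifying two ingredients: (a) that $G$ is displacement $\alpha_\lambda$-monotone for some computable constant $\alpha_\lambda=\alpha_\lambda(\lambda_0,\lambda_1,\lambda_2,\lambda_3)$, and (b) that the transformed Hamiltonian $H_{\alpha_\lambda}$ is displacement monotone. Step (a) is immediate from Proposition \ref{cor:antiImpliesSemi}: the $\lambda$-anti-monotonicity of $G$ yields an explicit $\alpha_\lambda$ which depends \emph{only} on $\lambda$ and not on the higher order regularity bounds on $G$. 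So I would simply record this value and carry it forward.

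For (b) the idea is to apply Theorem \ref{thm:main2} to $H = \langle A_0 x, p\rangle + H_0$. A direct computation gives
\[
\pa_{xp}H = A_0 + \pa_{xp}H_0, \qquad \pa_{pp}H = \pa_{pp}H_0, \qquad \pa_{xx}H = \pa_{xx}H_0,
\]
while $\pa_{x\mu}H = \pa_{x\mu}H_0$ and $\pa_{p\mu}H = \pa_{p\mu}H_0$, since the bilinear term $\langle A_0 x,p\rangle$ carries no measure dependence. Therefore
\[
\underline{\kappa}(\pa_{xp}H) \;\geq\; \lambda_{\min}(\Real A_0) - |\pa_{xp}H_0|,
\]
which means that in order for the hypothesis $\underline{\kappa}(\pa_{xp}H) \geq \tfrac12 |\pa_{p\mu}H| + \sqrt{|\pa_{pp}H|\,L^H_{our}}$ of Theorem \ref{thm:main2} to hold, the symmetric part of $A_0$ must be sufficiently positive relative to the second order bounds on $H_0$. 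This is precisely the role played by the ``specific assumption'' piece of \cite[Theorem 7.1]{MouZha:2024}. Given this, Theorem \ref{thm:main2} produces an interval $[\alpha^H_-,\alpha^H_+]$ of parameters for which $H_\alpha$ is displacement monotone, and the task reduces to checking that the constant $\alpha_\lambda$ obtained in (a) actually falls into this interval: this is the single place where the joint condition linking $\lambda$, $A_0$ and the structural constants of $H_0$ truly enters.

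Once (a) and (b) are both in place, Corollary \ref{cor:anti-semi} immediately delivers global in time classical well-posedness of the master equation \eqref{eq:master}, concluding the proof. The main obstacle I anticipate is not conceptual but one of bookkeeping: one has to translate the compound Mou--Zhang hypothesis, which bundles all four parameters $\lambda_0,\lambda_1,\lambda_2,\lambda_3$ together with $A_0$ and the derivative bounds on $H_0$, into the single scalar inequality $\alpha_\lambda \in [\alpha^H_-,\alpha^H_+]$. A clean comparison requires writing the formula for $\alpha_\lambda$ coming from Proposition \ref{cor:antiImpliesSemi} side by side with the formula defining $\alpha^H_\pm$ in terms of $\underline{\kappa}(\Real A_0)$, $|\pa_{pp}H_0|$, $|\pa_{xx}H_0|$, $|\pa_{p\mu}H_0|$, $|\pa_{x\mu}H_0|$ and $c_0$, and then reading off the precise quantitative relation that \cite[Theorem 7.1]{MouZha:2024} imposes.
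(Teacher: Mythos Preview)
Your proposal is correct and mirrors the paper's treatment almost exactly. The paper does not give a standalone proof of Theorem~\ref{thm:anti-mon_chenchen} (it is quoted from \cite{MouZha:2024}); instead, it establishes Proposition~\ref{prop:last} by precisely your steps (a) and (b)---using Proposition~\ref{cor:antiImpliesSemi} to extract a semi-monotonicity constant from the $\lambda$-anti-monotonicity of $G$, and Theorem~\ref{thm:main2} applied to $H=\langle A_0 x,p\rangle+H_0$ to obtain displacement monotonicity of $H_\alpha$---and then in Remark~\ref{rmk:last} carries out exactly the bookkeeping you anticipate, showing that the compound hypotheses of \cite[Theorem~7.1]{MouZha:2024} imply the single scalar inequality \eqref{ineq:ours} up to a multiplicative constant depending only on the condition number $K_H$ of $\pa_{pp}H$. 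One small refinement: rather than checking that the specific value $\alpha_\lambda$ lies in $[\alpha^H_-,\alpha^H_+]$, the paper takes the midpoint $\alpha=\big(\underline{\kappa}(\pa_{xp}H)-\tfrac12|\pa_{p\mu}H|\big)/|\pa_{pp}H|$ of that interval and verifies $\alpha\ge\alpha_\lambda$, which suffices since displacement $\alpha_\lambda$-monotonicity of $G$ implies displacement $\alpha$-monotonicity for every $\alpha\ge\alpha_\lambda$.
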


\medskip

\begin{prop}\label{prop:last}
Suppose that $G:\R^d\times\sP_2(\R^d)\to\R$ is $\lambda$-anti monotone and satisfies Assumption \ref{hyp:G}. Suppose that $H:\R^d\times\sP_2(\R^d)\times\R^d\to\R$ is given by
$$H(x,\mu,p) = \inn{A_0 x}p + H_0(x,\mu,p),$$ 
with $H_0:\R^d\times\sP_2(\R^d)\times\R^d\to\R$ {satisfying Assumption \ref{hyp:H} and $A_0\in\R^{d\times d}$ is a given constant matrix}. %Let $K_H := \abs{\pa_{pp} H} / c_0 {= \abs{\pa_{pp} H_0} / c_0}$ be the condition number of $H$.  
{Let $K_H := c_0\abs{\pa_{pp} H} = c_0\abs{\pa_{pp} H_0}$ be the condition number of $\pa_{pp} H$.}   Suppose that 
\begin{align}\label{ineq:ours}
\underline{\kappa}(A_0) \geq\max\left\{ \(\frac72+\frac{\sqrt{K_H}}{2}\) L_2^{H_0} + \sqrt{\abs{\pa_{pp}H} \abs{\pa_{xx}H_0} }; \({\frac32} + f(\lambda)\)L_2^{H_0}\right\},
\end{align}
 %and $\underline{\kappa}(A_0) \geq L_2^{H_0}\(\frac12 + f(\lambda)\)$ 
where $\lambda = (\lambda_0,\lambda_1,\lambda_2,\lambda_2)$, we have set
\begin{align*}
f(\lambda) &:= \frac {5{\abs{\lambda_1}}} {4\lambda_2}  + 1 + {\frac{\lambda_3}{2\lambda_2}} + \frac{\lambda_0}{4\lambda_2} + \frac{5\lambda_0}{4}+
\frac{\lambda_3}2 + \frac{{\abs{\lambda_1}}}{4}\\ 
& = 1 + \frac12\left(\frac{5\lambda_0}{2}+  \frac{{\abs{\lambda_1}}}{2} + 
\lambda_3\right) + \frac{1}{2\lambda_2}\left(\frac{\lambda_0}{2} + \frac {5{\abs{\lambda_1}}} {2} + \lambda_3\right),
\end{align*}
{and $L_2^{H_0}>0$ is a constant associated to $H_0$, satisfying
$$|\pa_{xp}H_0|\le L_2^{H_0},\ |\pa_{pp}H_0|\le L_2^{H_0},\ |\pa_{x\mu}H_0|\le L_2^{H_0}\ {\rm{and}}\ |\pa_{p\mu}H_0|\le L_2^{H_0}.$$
}

Then the master equation is globally well-posed. 
\end{prop}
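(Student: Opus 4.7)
The plan is to deduce Proposition~\ref{prop:last} by chaining Proposition~\ref{cor:antiImpliesSemi} with Theorem~\ref{thm:main2} and invoking Corollary~\ref{cor:anti-semi}. I would proceed in three main steps.

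First, I verify that $H$ itself satisfies Assumption~\ref{hyp:H}. Because $(x,p)\mapsto \langle A_0 x,p\rangle$ is bilinear, the derivatives $\pa_{pp}H$, $\pa_{xx}H$, $\pa_{x\mu}H$, and $\pa_{p\mu}H$ coincide with those of $H_0$, while $\pa_{xp}H = A_0 + \pa_{xp}H_0$ remains uniformly bounded. The Legendre identity $\pa_p H\cdot p - H = \pa_p H_0\cdot p - H_0$ holds since the bilinear contribution cancels exactly, so the last bullet of Assumption~\ref{hyp:H} is inherited from $H_0$. Next, Proposition~\ref{cor:antiImpliesSemi} yields displacement $\alpha_\lambda$-monotonicity of $G$ for any $\alpha_\lambda$ at least as large as the maximum appearing in its statement. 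Using $\sqrt{a+b+c}\le\sqrt{a}+\sqrt{b}+\sqrt{c}$, $\sqrt{x}\le \tfrac12(1+x)$, and $1/\sqrt{\lambda_2}\le \tfrac12(1+1/\lambda_2)$, one checks that both expressions inside that maximum are bounded above by the explicit $f(\lambda)$ defined in the statement, so that any value of $\alpha_\lambda$ between this maximum and $f(\lambda)$ is admissible.

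The heart of the proof is the third step: verifying via Theorem~\ref{thm:main2} that $H_{\alpha_\lambda}$ is displacement monotone. This requires (a) the non-emptiness condition $\underline{\kappa}(\pa_{xp}H) \ge \tfrac12 |\pa_{p\mu}H| + \sqrt{|\pa_{pp}H|L_{our}^H}$, and (b) the containment $\alpha_\lambda\in [\alpha^H_-,\alpha^H_+]$. For (a), I would use $\underline{\kappa}(\pa_{xp}H)\ge \underline{\kappa}(A_0)-L_2^{H_0}$ together with $L_{our}^H\le L_2^{H_0}+\tfrac{c_0}{4}(L_2^{H_0})^2+|\pa_{xx}H_0|$, subadditivity of the square root, and $\sqrt{1+K_H/4}\le 1+\sqrt{K_H}/2$, to bound the right-hand side of (a) by $L_2^{H_0}(\tfrac52+\tfrac12\sqrt{K_H})+\sqrt{|\pa_{pp}H||\pa_{xx}H_0|}$; the first branch of \eqref{ineq:ours} then yields (a), with one unit of slack to spare. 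For (b), I would observe that the midpoint $\alpha^*:=(\underline{\kappa}(\pa_{xp}H)-\tfrac12|\pa_{p\mu}H|)/|\pa_{pp}H|$ always lies in $[\alpha^H_-,\alpha^H_+]$ (degenerating to the common endpoint when the discriminant vanishes), so it suffices to pick $\alpha_\lambda\le \alpha^*$. Using $|\pa_{pp}H|\le L_2^{H_0}$, $|\pa_{p\mu}H|\le L_2^{H_0}$, and $\alpha_\lambda\le f(\lambda)$, the inequality $\alpha_\lambda\le \alpha^*$ reduces to $\underline{\kappa}(A_0)\ge (\tfrac32+f(\lambda))L_2^{H_0}$, which is exactly the second branch of \eqref{ineq:ours}.

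Once (a) and (b) are established, Corollary~\ref{cor:anti-semi} delivers global well-posedness of the master equation with data $(H,G)$. The main (though mild) obstacle is the constant bookkeeping in the third step: one must carefully track how the addition of $\langle A_0 x,p\rangle$ enters only $\pa_{xp}H$ while leaving the other second derivatives appearing in $L_{our}^H$ untouched, and then match the resulting estimates, together with the elementary inequalities used to bound $\alpha_\lambda$, against the two branches of \eqref{ineq:ours}.
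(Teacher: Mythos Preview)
Your proposal is essentially correct and follows the same route as the paper's own proof: both arguments bound the semi-monotonicity constant of $G$ by $f(\lambda)$ via Proposition~\ref{cor:antiImpliesSemi}, verify the non-emptiness condition of Theorem~\ref{thm:main2} using the first branch of \eqref{ineq:ours}, and use the second branch to show that the midpoint $\alpha^*=(\underline{\kappa}(\pa_{xp}H)-\tfrac12|\pa_{p\mu}H|)/|\pa_{pp}H|$ satisfies $\alpha^*\ge f(\lambda)$.

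There is, however, one logical slip in your step (b). From ``$\alpha^*\in[\alpha^H_-,\alpha^H_+]$'' you infer that ``it suffices to pick $\alpha_\lambda\le\alpha^*$'', and then proceed with $\alpha_\lambda\le f(\lambda)$. But $\alpha_\lambda\le\alpha^*$ only gives $\alpha_\lambda\le\alpha^H_+$; it says nothing about $\alpha_\lambda\ge\alpha^H_-$, so the containment $\alpha_\lambda\in[\alpha^H_-,\alpha^H_+]$ is not established. The clean fix---which is exactly what the paper does---is to set $\alpha_\lambda=\alpha^*$ itself: since $\alpha^*\in[\alpha^H_-,\alpha^H_+]$ always, and since you have already shown $\alpha^*\ge f(\lambda)\ge$ (the maximum from Proposition~\ref{cor:antiImpliesSemi}), this single choice satisfies both the semi-monotonicity requirement for $G$ and the displacement monotonicity of $H_{\alpha^*}$. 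With this adjustment your argument goes through.
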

\begin{proof}
{Let us note that by the definition of $L_2^{H_0}$ and by the definition of $L^{H_0}_{our}$, we have that 
\begin{align}\label{ineq:mz}
L^{H_0}_{our}\le L_2^{H_0} + \frac{c_0}{4}\(L^{H_0}\)^2 + |\pa_{xx}H_0|.
\end{align}
}

{As $\underline{\kappa}(\pa_{xp}H) \geq \underline{\kappa}(A_0) - \abs{\pa_{xp} H_0}$}, we see that  the assumption $\underline{\kappa}(A_0) \geq (\frac72+\frac{\sqrt{K_H}}{2}) L_2^{H_0} + \sqrt{\abs{\pa_{pp}H} \abs{\pa_{xx}H_0} }$ {and \eqref{ineq:mz}} imply
\small
\begin{align*}
\underline{\kappa}({\pa_{xp}H})& \geq  \underline{\kappa}(A_0) - \abs{\pa_{xp} H_0} \ge 3 L_2^{H_0} + \frac12\abs{\pa_{p\mu}H} - \abs{\pa_{xp} H_0} + \frac{\sqrt{K_H}}{2} L_2^{H_0}\\
& + \sqrt{\abs{\pa_{pp}H} \abs{\pa_{xx}H_0} } \\
&\ge  2 L_2^{H_0} + \frac12\abs{\pa_{p\mu}H} + \frac{\sqrt{K_H}}{2} L_2^{H_0} + \sqrt{\abs{\pa_{pp}H} \abs{\pa_{xx}H_0} }\\
& =  \frac12\abs{\pa_{p\mu}H} + \sqrt{\frac{c_0}{4}|\pa_{pp}H_0|\(L_2^{H_0}\)^2}+ \sqrt{4\left( L_2^{H_0}\right)^2} + \sqrt{\abs{\pa_{pp}H} \abs{\pa_{xx}H_0} }\\
&\ge \frac12\abs{\pa_{p\mu}H} + \sqrt{\frac{c_0}{4}|\pa_{pp}H_0|\(L_2^{H_0}\)^2}+ \sqrt{4|\pa_{pp}H_0| L_2^{H_0}} + \sqrt{\abs{\pa_{pp}H} \abs{\pa_{xx}H_0} }\\ 
&\geq \frac12\abs{\pa_{p\mu}H_0}  + \sqrt{\abs{\pa_{pp} H_0} L_{our}^{H_0}}\\
&= \frac12\abs{\pa_{p\mu}H}  + \sqrt{\abs{\pa_{pp} H} L_{our}^{H}}
\end{align*}
\normalsize
and so we can apply Theorem \ref{thm:main2}. We get that $H$ is displacement $\alpha$-monotone with 
\begin{align*}
\alpha &= \frac{\underline{\kappa}(\pa_{xp} {H}) - \frac12 \abs{\pa_{p\mu}H}}{\abs{\pa_{pp} H}}\\
&\ge { \frac{\underline{\kappa}(A_0) - |\pa_{xp}H_0| - \frac12 \abs{\pa_{p\mu}H_0}}{\abs{\pa_{pp} H_0}}}\\
&\ge { \frac{ \(\frac32 + f(\lambda)\)L_2^{H_0}- |\pa_{xp}H_0| - \frac12 \abs{\pa_{p\mu}H_0}}{\abs{\pa_{pp} H_0}}}\\
& \geq f(\lambda).
\end{align*}
From Proposition \ref{cor:antiImpliesSemi} we see that $G$ is semi-monotone with constant 
\small
\[
\eta 
&:= \frac {{\abs{\lambda_1}}} {2\lambda_2}  + \sqrt{\frac{\lambda_3}{\lambda_2} + \frac{{\lambda_0} ^2}{4\lambda_2} + {\(\frac {\lambda_1} {2\lambda_2}\)^2}} +
\frac{\lambda_0}2 + 
\sqrt{\lambda_3 + \(\frac {\lambda_0} 2\)^2 + \lambda_2\(\frac {\lambda_1} {2\lambda_2}\)^2} \\
&\leq \frac {{\abs{\lambda_1}}} {2\lambda_2}  + \sqrt{\frac{\lambda_3}{\lambda_2}} + \sqrt{\frac{{\lambda_0} ^2}{4\lambda_2}} + \sqrt{{\(\frac {\lambda_1} {2\lambda_2}\)^2}} +
\frac{\lambda_0}2 + 
\sqrt{\lambda_3} + \sqrt{\(\frac {\lambda_0} 2\)^2} + \sqrt{\lambda_2\(\frac {\lambda_1} {2\lambda_2}\)^2} \\
&\leq \frac {{\abs{\lambda_1}}} {\lambda_2}  + \sqrt{\frac{\lambda_3}{\lambda_2}} + \sqrt{\frac{{\lambda_0} ^2}{4\lambda_2}} +
{\lambda_0} + 
\sqrt{\lambda_3} + \sqrt{\frac {\lambda_1^2} {4\lambda_2}} \\
&\leq \frac {{\abs{\lambda_1}}} {\lambda_2}  + \frac12 + {\frac{\lambda_3}{2\lambda_2}} + \frac{\lambda_0}{4\lambda_2} + \frac{\lambda_0}{4}+
{\lambda_0} + 
\frac12 + \frac{\lambda_3}2 + {\frac {{\abs{\lambda_1}}} {4\lambda_2}} + \frac{{\abs{\lambda_1}}}{4}\\
&= \frac {5{\abs{\lambda_1}}} {4\lambda_2}  + 1 + {\frac{\lambda_3}{2\lambda_2}} + \frac{\lambda_0}{4\lambda_2} + \frac{5\lambda_0}{4}+
\frac{\lambda_3}2 + \frac{{\abs{\lambda_1}}}{4} \\
&= f(\lambda)
\]
\normalsize
and so the result follows. 
\end{proof}

\begin{rmk}\label{rmk:last}
We compare Proposition \ref{prop:last} with \cite[Theorem 7.1]{MouZha:2024}. This theorem has many assumptions. We show that up to constants (depending only on $K_H$) only a few of these many assumptions imply our assumptions. {First, we recall that the definition of the $3\times 3$ matrices $A_1,A_2$ from formula \cite[(4.3)]{MouZha:2024}. These are not constructed from $A_0$ above, and they involve constants coming in particular from $\lambda=(\lambda_0,\lambda_1,\lambda_2,\lambda_3).$ Furthermore, for $A\in\R^{d\times d}$,  $\bar\kappa(A)$ stands for the largest eigenvalue of $\Real(A).$}

To continue we need the assumption
\begin{align}\label{ineq:theirs}
\underline{\kappa}(A_0) \geq (1+\bar{\kappa}(A_1^{-1}A_2)) L_2^{H_0}. 
\end{align}

In \cite[Theorem 7.1]{MouZha:2024} (specifically the second item of (7.1)) it is assumed that
\begin{align}\tag{\ref{ineq:theirs}$'$}\label{ineq:theirsPrime}
\underline{\kappa}(A_0) \geq (1+\underline{\kappa}(A_1^{-1}A_2)) L_2^{H_0}, 
\end{align}
although they probably meant to assume \eqref{ineq:theirs}\footnote{   
	% In the version of the manuscript recorded on the arXiv, we observe that this assumption had a typo, which is written as $\underline{\kappa}(A_0) \geq (1+\underline{\kappa}(A_1^{-1}A_2)) L_2^{H_0}.$ 
	The $\underline{\kappa}$ on the right-hand side is likely a typo as in the fourth to last line on \cite[page 15]{MouZha:2024} the authors need to use $\bar{\kappa}(A_1^{-1}A_2)$. Furthermore we see $\bar{\kappa}$ appearing correctly also {in a similar assumption, \cite[(6.3)]{MouZha:2022}}.} .

We can formulate the following statement. 
\medskip

{\bf Claim.} {\it The assumptions of \cite[Theorem 7.1]{MouZha:2024}, up to a multiplicative constant depending on $K_H$, imply \eqref{ineq:ours}. }

\medskip

{\bf Proof of claim.} By definition, we have that $\bar{\kappa}(A_1^{-1}A_2) \geq v^\top A_1^{-1}A_2 v$ for any unit vector $v\in\R^3$. Taking $v = \frac{1}{\sqrt 3}(1,1,1)^\top$ and using the explicit form of $A_1, A_2$ given in \cite[(4.3)]{MouZha:2024} {together with the fact that all the entries of these matrices are non-negative,} by direct computation we obtain
\footnotesize
\[
\bar{\kappa}(A_1^{-1}&A_2) 
\geq \frac{1}{3} \( \frac14 \( \lambda_0 + \lambda_0 +   \abs{\lambda_0 - \frac12 \lambda_1} + \lambda_3 \) + \frac1{{2}\lambda_2} \( \lambda_0 + \abs{\lambda_1} + (\frac12 \abs{\lambda_1} + \lambda_2 + \lambda_3) \) \) \\
&\geq \frac{1}{3} \( \frac14 \( \lambda_0 + \lambda_0 -   \abs{\lambda_0} + \frac12 \abs{\lambda_1} + \lambda_3 \) + \frac1{{2}\lambda_2} \( \lambda_0 + \abs{\lambda_1} + (\frac12 \abs{\lambda_1} + \lambda_2 + \lambda_3) \) \) \\
&= \frac{1}{3} \( \frac14 \( \lambda_0 + \frac12 \abs{\lambda_1} + \lambda_3 \) + \frac1{{2}\lambda_2} \( \lambda_0 + \abs{\lambda_1} + (\frac12 \abs{\lambda_1} + \lambda_2 + \lambda_3) \) \) \\
&\geq \frac{1}{{15}} f(\lambda),
\]
\normalsize

\medskip

so \eqref{ineq:theirs} 
implies that 
\begin{align}\label{ineq:last11}
\underline{\kappa}(A_0) \geq \frac{1}{15} L_2^{H_0}\({15} + f(\lambda)\).
\end{align} 

Furthermore we see from the second inequality in \cite[(7.2)]{MouZha:2024} that 
$$\bar{\gamma}\underline{\kappa}(A_0) \geq \abs{\pa_{xx}H}.$$ By the assumption (i) of \cite[Theorem 7.1]{MouZha:2024} we have that $\bar{\gamma}$ satisfies \cite[(4.2)]{MouZha:2024} in which the first inequality implies that $\lambda_0 > \frac{\bar{\gamma}^2}{4 \underline{\gamma}} - \frac{8\lambda_3}{4\underline{\gamma}}$. Hence we obtain $(4 \underline{\gamma} \lambda_0 + 8 \lambda_3) \geq \bar{\gamma}^2$. It is clear that $2f(\lambda) \geq \lambda_0$ and $2f(\lambda) \geq \lambda_3$, therefore we get $16f(\lambda)(1 + \underline{\gamma}) \geq \bar{\gamma}^2$. Since $\underline{\gamma} < \bar{\gamma}$ by assumption (i) of \cite[Theorem 7.1]{MouZha:2024} and $1 < \bar{\gamma}$ by the same assumption we get $2\bar{\gamma} \geq 1 + \underline{\gamma}$ and so we obtain $32f(\lambda) \geq \bar{\gamma}$. 
% Since $\underline{\gamma} < \bar{\gamma}$ by assumption (i) of \cite[Theorem 7.1]{MouZha:2024} we have that $\lambda_0 > \frac{\bar{\gamma}}{4}$. It is clear that $f(\lambda) > \lambda_0$ and so 
Hence we get 
\[
\underline{\kappa}(A_0)^2
\geq \frac{L_2^{H_0}}{15} f(\lambda) \underline{\kappa}(A_0)
\geq \frac{L_2^{H_0}}{{15\cdot 32}} \bar{\gamma} \underline{\kappa}(A_0)
\geq \frac{\abs{\pa_{pp} H}}{{15\cdot 32}} \abs{\pa_{xx} H}
\]
and so we obtain $\underline{\kappa}(A_0) \geq \frac{1}{{4\sqrt{30}}} \sqrt{\abs{\pa_{pp} H} \abs{\pa_{xx} H}}$. 

%Of course Chenchen's assumption that $\underline{\kappa}(A_0) \geq (1+\bar{\kappa}(A_1^{-1}A_2)) L_2^{H_0}$ 
Moreover, \eqref{ineq:theirs} implies that $\underline{\kappa}(A_0) \geq L_2^{H_0}$ and so we get 
\begin{align}\label{ineq:last12}
\underline{\kappa}(A_0) \geq \frac{1}{2} L_2^{H_0} + \frac{1}{{8\sqrt{30}}} \sqrt{\abs{\pa_{pp} H} \abs{\pa_{xx} H}}.
\end{align}

To summarize, the assumptions of \cite[Theorem 7.1]{MouZha:2024} imply \eqref{ineq:last11} and \eqref{ineq:last12} which in turn imply that 
\footnotesize
\[
\underline{\kappa}(A_0) \geq \frac{1}{{8\sqrt{30}} + \sqrt{K_H}}\max\left\{ \(\frac72+\frac{\sqrt{K_H}}{2}\) L_2^{H_0} + \sqrt{\abs{\pa_{pp}H} \abs{\pa_{xx}H_0} }; \({\frac32} + f(\lambda)\)L_2^{H_0}\right\}.
\]
\normalsize
This, aside from the constant of $\frac{1}{{8\sqrt{30}} + \sqrt{K_H}}$ in front, is the exact assumption \eqref{ineq:ours} of our Proposition \ref{prop:last}.
\end{rmk}

{\bf Acknowledgements.} The authors are grateful to Wilfrid Gangbo for valuable remarks and constructive comments. MB's work was supported by the National Science Foundation Graduate Research Fellowship under Grant No. DGE-1650604 and by the Air Force Office of Scientific Research under Award No. FA9550-18-1-0502. ARM has been partially supported by the EPSRC New Investigator Award ``Mean Field Games and Master equations'' under award no. EP/X020320/1 and by the King Abdullah University of Science and Technology Research Funding (KRF) under award no. ORA-2021-CRG10-4674.2. Both authors acknowledge the partial support of the Heilbronn Institute
for Mathematical Research and the UKRI/EPSRC Additional Funding Programme for Mathematical Sciences through the focused research grant ``The master equation in Mean Field Games''.

\medskip
\medskip

{\bf Data availability statement.} No data was generated for the purposes of this research.

{\bf Declarations.} The authors declare that there are no conflicts or competing interests. 

\bibliographystyle{alpha}
\bibliography{MeanField}

\end{document}